\newfont{\teneufm}{eufm10}
\newfont{\seveneufm}{eufm7}
\newfont{\fiveeufm}{eufm5}
 \patchcmd\Gread@eps{\@inputcheck#1 }{\@inputcheck"#1"\relax}{}{}
\newtheorem{thm}{Theorem}
\newtheorem{rem}[thm]{Remark}
\newcommand{\Tr}{{\rm Tr}}
\newcommand{\Trn}{{\rm Tr}_n}
\newcommand{\cB}{\mathscr{B}}
\def\+{\oplus}
\def\cA{{\mathcal A}}
\def\cB{{\mathcal B}}
\def\cC{{\mathcal C}}
\def\cN{{\mathcal N}}
\def\F{{\mathbb F}}
\def\Fn{{\mathbb{F}_{p^n}}}
\def\00{{\bf 0}}
\def\11{{\bf 1}}
\def\+{\oplus}
\def\\{\cr}
\def\({\left(}
\def\){\right)}
\newcommand{\cardinality}[1]{\# #1}
\providecommand{\newoperator}[3]{%
  \newcommand*{#1}{\mathop{#2}#3}}
\newoperator{\FD}{\mathrm{FD}}{\nolimits}
\newcommand{\sA}{\mathscr{A}}
\newcommand{\sX}{\mathscr{X}}
\newcommand{\sY}{\mathscr{Y}}
\begin{document}
\title{\bf Using double Weil sums in finding the $c$-Boomerang Connectivity Table for monomial functions on finite fields}
\author{Pantelimon~St\u anic\u a  \\ 
Applied Mathematics Department, \\
Naval Postgraduate School, Monterey, USA. \\
E-mail: pstanica@nps.edu}

 \maketitle 

\begin{abstract}
In this paper we characterize the  $c$-Boomerang Connectivity Table (BCT),  $c\neq 0$ (thus, including the classical $c=1$ case), for all monomial function $x^d$ in terms of characters and Weil sums on the finite field~$\F_{p^n}$, for an odd prime~$p$. We further simplify these expressions for the Gold functions $x^{p^k+1}$ for all $1\leq k<n$, and $p$ odd. It is the first such attempt for a complete description  for the classical BCT and its relative $c$-BCT, for all parameters involved.
\end{abstract}

{\bf Keywords:} 
Finite fields,
characters,
$p$-ary functions, 
$c$-differentials,  
differential uniformity,
boomerang uniformity,
double Weil sums
\newline
{\bf MSC 2000}: 11L07, 11L40, 11T06, 11T24, 94A60.

\section{Introduction and basic definitions}

For the first time, in this paper we find a characterization of the boomerang connectivity table and its uncle, $c$-boomerang connectivity table ($c$-BCT)~\cite{S20}, for all monomial functions in terms of characters of the relevant finite field (all characteristics). We further detail that characterization for the Gold functions $x^{p^k+1}$ for all $1\leq k<n$, where $p$ is an odd prime.  Since our method mostly relies on finding some double Weil sums, it may have an interest beyond its applicability in the computation of the $c$-BCT.
   
For a positive integer $n$ and $p$ a prime number, we let $\F_{p^n}$ be the  finite field with $p^n$ elements, and $\F_{p^n}^*=\F_{p^n}\setminus\{0\}$ be the multiplicative group (for $a\neq 0$, we often write $\frac{1}{a}$ to mean the multiplicative inverse of $a$).  We use $|{S}|$ to denote the cardinality of a set $S$ and $\bar z$, for the complex conjugate.
We call a function from $\F_{p^n}$   to $\F_p$  a {\em $p$-ary  function} on $n$ variables. For positive integers $n$ and $m$, any map $F:\F_{p^n}\to\F_{p^m}$   is called a {\em vectorial $p$-ary  function}, or {\em $(n,m)$-function}. When $m=n$, $F$ can be uniquely represented as a univariate polynomial over $\F_{p^n}$ (using some identification, via a basis, of the finite field with the vector space) of the form
$
F(x)=\sum_{i=0}^{p^n-1} a_i x^i,\ a_i\in\F_{p^n}.
$
For $f:\F_{p^n}\to \F_p$ we define the absolute trace $\Trn:\F_{p^n}\to \F_p$, given by $\displaystyle \Trn(x)=\sum_{i=0}^{n-1} x^{p^i}$ (we will denote it by $\Tr$, if the dimension is clear from the context).  The reader can consult~\cite{LN97} for more on this and related notions in finite fields.
  
Given a vectorial $p$-ary  function $F$, the derivative of $f$ with respect to~$a \in \F_{p^n}$ is the $p$-ary  function
$
 D_{a}F(x) =  F(x + a)- F(x), \mbox{ for  all }  x \in \F_{p^n}.
$ 
For an $(n,n)$-function $F$, and $a,b\in\F_{p^n}$, we let $\Delta_F(a,b)=\cardinality{\{x\in\F_{p^n} : D_{a}f(x)=b\}}$. We call the quantity
$\delta_F=\max\{\Delta_F(a,b)\,:\, a,b\in \F_{p^n}, a\neq 0 \}$ the {\em differential uniformity} of $F$. If $\delta_F= \delta$, then we say that $F$ is differentially $\delta$-uniform. 

For the interested reader, we point to~\cite{Bud14,CH1,CH2,CS17,MesnagerBook,Tok15} for a proper background on  Boolean and $p$-ary functions.

As a follow up to Wagner's work~\cite{Wag99} on the boomerang attack against block ciphers (see also~\cite{BK99,KKS00,BDK02,Kim12}) and  Cid et al.~\cite{Cid18} who introduced the theoretical tool called  the Boomerang Connectivity Table (BCT) and Boomerang Uniformity, we defined in~\cite{S20} the $c$-Boomerang Connectivity Table ($c$-BCT) and analyzed some known perfect nonlinear as well as the inverse function in even and odd characteristics.  

Let $F$ be a permutation on $\F_{p^n}$  and $(a,b)\in\F_{p^n}\times \F_{p^n}$. We define the entries of the {\em Boomerang Connectivity Table} \textup{(}{\em BCT}\textup{)} by
\[
\cB_F(a,b)=\cardinality \{x\in\F_{2^n}|F^{-1} (F(x)+b)-F^{-1}(F(x+a)+b)=a  \},
\]
where $F^{-1}$ is the compositional inverse of $F$, and the {\em boomerang uniformity} of $F$  is
$\displaystyle
\beta_F=\max_{a,b\in\Fn*} \cB_F(a,b).
$
We also say that $F$ is a $\beta_F$-uniform BCT function. Surely, $\Delta_F(a,b)=0,2^n$ and $\cB_F(a,b)=p^n$  whenever $ab=0$. We know that $\delta_F=\delta_{F^{-1}}$, $\beta_F=\beta_{F^{-1}}$, and  for permutations, $\beta_F\geq \delta_F$ and they are equal for APN permutations.    
 We mention here that this concept became an object of study for many recent papers~\cite{BC18, BPT19, CV19,Li19, LiHu20, Mes19,TX20}, to mention just a few.  
 
  Li et al.~\cite{Li19} (see also~\cite{Mes19}) observed that
\begin{equation*}
\begin{split}
\label{eq:boom-diff}
\cB_F(a,b)&=\cardinality \left\{ (x,y)\in\Fn\times \Fn\,\Large\big|\, \substack{F(x)+F(y)=b \\  F(x+a)+F(y+a)=b } \right\}, \\
& =\sum_{\gamma\in\Fn} \cardinality  \left\{ x \in\Fn \,\big|\, D_\gamma F(x)=b \text{ and }  D_\gamma F(x+a)=b   \right\},
\end{split}
\end{equation*}
and therefore, the concept can be extended to non-permutations, since it avoids the inverse of~$F$.

Based upon our prior $c$-differential concept~\cite{EFRST20} (see also~\cite{HMRS20,RS20,SG20,YZ20} for very recent work on that topic), we extended this notion recently in~\cite{S20} to the $c$-Boomerang Connectivity Table.
For a $p$-ary $(n,m)$-function   $F:\F_{p^n}\to \F_{p^m}$, and $c\in\F_{p^m}$, the ({\em multiplicative}) {\em $c$-derivative} of $F$ with respect to~$a \in \F_{p^n}$ is the  function
\[
 _cD_{a}F(x) =  F(x + a)- cF(x), \mbox{ for  all }  x \in \F_{p^n}.
\]

For an $(n,n)$-function $F$, and $a,b\in\F_{p^n}$, we let the entries of the $c$-Difference Distribution Table ($c$-DDT) be defined by ${_c\Delta}_F(a,b)=\cardinality{\{x\in\F_{p^n} : F(x+a)-cF(x)=b\}}$. We call the quantity
\[
\delta_{F,c}=\max\left\{{_c\Delta}_F(a,b)\,|\, a,b\in \F_{p^n}, \text{ and } a\neq 0 \text{ if $c=1$} \right\}\]
the {\em $c$-differential uniformity} of~$F$. If $\delta_{F,c}=\delta$, then we say that $F$ is differentially $(c,\delta)$-uniform (or that $F$ has $c$-uniformity $\delta$, or for short, {\em $F$ is $\delta$-uniform $c$-DDT}). We can recover all the classical perfect and almost perfect nonlinear functions, taking $c=1$.
It is easy to see that if $F$ is an $(n,n)$-function, that is, $F:\F_{p^n}\to\F_{p^n}$, then $F$ is PcN if and only if $_cD_a F$ is a permutation polynomial.

Further, 
for an $(n,n)$-function $F$, $c\neq 0$, and $(a,b)\in\Fn\times \Fn$,  we define~\cite{S20} the {\em $c$-Boomerang Connectivity Table}  \textup{(}$c$-BCT\textup{)} entry at $(a,b)$ to be
{\small
\begin{equation*}
\label{eq:originalBCT}
  _c\cB_F(a,b)=\cardinality \left\{ x\in\Fn\,\Big|\,  F^{-1}(c^{-1} F(x+a)+b) -F^{-1}(cF(x)+b)=a \right\}.
\end{equation*}
}
and  the {\em $c$-boomerang uniformity} of $F$ is 
$\displaystyle
\beta_{F,c}=\max_{a,b\in\Fn*} {_c}\cB_F(a,b).
$
If $\beta_{F,c}=\beta$, we also say that $F$ is a $\beta$-uniform $c$-BCT function.
We showed in~\cite{S20} that we can avoid inverses, thus   allowing the definition to be extended to all $(n,m)$-function, not only permutations. Precisely,   the entries of the $c$-Boomerang Connectivity Table at $(a,b)\in\F_{p^n}\times\F_{p^n}$  can be given by
\begin{align*}
_c\cB_F(a,b)&=\cardinality \left\{ (x,y)\in\Fn\times \Fn\,\Big|\, \Large\substack{F(y)-cF(x) =b \\  F(y+a)- c^{-1} F(x+a)=b }  \right\}\\
& =\sum_{\gamma\in\Fn} \cardinality  \left\{ x \in\Fn \,\big|\, \Large \substack{_cD_\gamma F(x)=b \text{ and }  _{c^{-1}}D_\gamma F(x+a)=b \\
\text{the $c$-boomerang system}}  \right\}.
\end{align*}

  
  The exact computation of the differential and/or boomerang uniformity and its relative with respect to $c$ seems to be quite difficult, even for monomials.
  It is the purpose of this paper to characterize the $c$-BCT ($c\neq 0$) for all monomials $x^d$ in terms of characters on the finite field~$\F_{p^n}$, where $p$ is any prime number. We use that characterization to further describe the $c$-BCT for all Gold functions $x^{p^k+1}$, $1\leq k<n$, $p$ odd, and $c\neq 0$.  In particular, our result can be seen as a significant generalization of the known results, where $p=2$, $\gcd(n,k)=1,2$, in which case the boomerang uniformity is 2, respectively, 4.

\section{A description of the $c$-BCT of the power map $x^d$ in terms of characters on $\F_{p^n}$} 
\label{sec3}

We concentrate here on the $c$-boomerang uniformity of the power maps $x^d$
over finite field $\mathbb{F}_{p^n}$. Let $G$ be the Gauss' sum $\displaystyle G(\psi,\chi)=\sum_{z\in\F_q^*} \psi(z)\chi(z)$, where $\chi,\psi$, are additive, respectively, multiplicative characters of $\F_q$, $q=p^n$. Below, we let $\chi_1(a)=\exp\left(\frac{2\pi \imath \Tr(a)}{p}\right)$ be the principal additive character, and $\psi_k\left(g^\ell\right)=\exp\left(\frac{2\pi ik\ell}{q-1}\right)$ be the $k$-th multiplicative character of $\F_q$, $0\leq k\leq q-2$. We let $\psi_1$ be the generator of the cyclic group of multiplicative characters.
\begin{thm}
\label{thm:cBU_Char}
Let $F(x)=x^d$ be a monomial function $\F_q$, $q=p^n$, $p$ a prime number. Let $c\in\F_q^*$ and $b\in\F_q$. Then, the $c$-Boomerang Connectivity Table entry  $_c\cB_F(a,ab)$ at $(a,ab)$, $a\neq 0$, is given by 
{\small
\[
\frac{1}{q}\left({_c}\Delta_{F}(1,b)+{_{c^{-1}}}\Delta_{F}(1,b)\right)+1 +\frac1{q^2}\sum_{\alpha,\beta\in\F_q,\alpha\beta\neq 0} \chi_1(-b(\alpha+\beta))\, S_{\alpha,\beta}\, S_{-\alpha c,-\beta c^{-1}},
\]
}
with
\begin{align*}
S_{\alpha,\beta}&=\sum_{x\in \F_q} \chi_1\left(\alpha x^d\right)\chi_1\left(\beta(x+1)^d\right)\\
&= \frac{1}{(q-1)^2} \sum_{j,k=0}^{q-2}   G(\bar\psi_j,\chi_1) G(\bar\psi_k,\chi_1) \sum_{x\in \F_q} \psi_1\left((\alpha x^d)^j (\beta(x+1)^d)^k\right).
\end{align*}
\end{thm}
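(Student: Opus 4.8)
The plan is to start from the inverse-free, two-variable description of the $c$-BCT entry recalled in the Introduction, namely that ${}_c\cB_F(a,ab)$ counts the pairs $(x,y)\in\F_q\times\F_q$ with $y^d-cx^d=ab$ and $(y+a)^d-c^{-1}(x+a)^d=ab$. Since $F(x)=x^d$ is homogeneous of degree $d$, I would first normalize by the substitution $x\mapsto ax$, $y\mapsto ay$ (legitimate because $a\neq0$), which turns the two constraints into $a^d(y^d-cx^d)=ab$ and $a^d\big((y+1)^d-c^{-1}(x+1)^d\big)=ab$ and, crucially, replaces $(y+a)^d$ by $a^d(y+1)^d$, producing exactly the shifted argument $(x+1)^d$ that appears in $S_{\alpha,\beta}$.

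Next I would detect each of the two equalities by additive-character orthogonality, writing the indicator of $\{z=0\}$ as $\frac1q\sum_{\alpha\in\F_q}\chi_1(\alpha z)$. Substituting both indicators and interchanging the order of summation gives a double sum over $(\alpha,\beta)$ of $\chi_1$ evaluated at the (rescaled) right-hand sides times a sum over $(x,y)$ of $\chi_1$ of the two left-hand sides. The decisive feature is that the $(x,y)$-sum factors as a product of a sum over $y$ and a sum over $x$, because the first constraint involves $(y^d,x^d)$ and the second $((y+1)^d,(x+1)^d)$ with no mixed terms; after absorbing the factors $a^d$ into $\alpha,\beta$ (a bijective change of variables on $\F_q^2$), each factor is precisely one of the sums $S_{\alpha,\beta}$, $S_{-\alpha c,-\beta c^{-1}}$, and one is left with the weighted double sum $\frac1{q^2}\sum_{\alpha,\beta}\chi_1(-b(\alpha+\beta))\,S_{\alpha,\beta}\,S_{-\alpha c,-\beta c^{-1}}$ displayed in the statement.

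I would then separate this double sum according to whether $\alpha\beta=0$. The term $\alpha=\beta=0$ gives $S_{0,0}^2/q^2=1$, accounting for the additive constant. The terms with exactly one of $\alpha,\beta$ equal to zero collapse, upon resumming the surviving free character variable by orthogonality, to $\frac1q$ times first-order counts of solutions of the individual boomerang equations; these furnish the difference-type contributions ${}_c\Delta_F(1,b)$ and ${}_{c^{-1}}\Delta_F(1,b)$, leaving the remaining $\alpha\beta\neq0$ part as the stated main sum. The careful bookkeeping here, tracking the shift $x\mapsto x\pm1$ introduced by $S_{0,\beta}$ and $S_{\alpha,0}$ and matching the surviving counts to the difference-table entries and the constant, is where I expect the most delicate accounting, and it is the step one must verify constant-by-constant.

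Finally, for the second displayed identity I would expand $S_{\alpha,\beta}$ by Fourier inversion over the multiplicative group: for $z\neq0$ one has $\chi_1(z)=\frac1{q-1}\sum_{j=0}^{q-2}G(\bar\psi_j,\chi_1)\psi_j(z)$. Applying this to $z=\alpha x^d$ and to $z=\beta(x+1)^d$, and using $\psi_j=\psi_1^{\,j}$ so that $\psi_j(\alpha x^d)\psi_k(\beta(x+1)^d)=\psi_1\big((\alpha x^d)^j(\beta(x+1)^d)^k\big)$, produces the claimed double Gauss-sum expression. The principal obstacle in this last step is that the multiplicative inversion formula is valid only off the zero set, so the two exceptional points $x=0$ and $x=-1$ (where $x^d=0$, respectively $(x+1)^d=0$) must be handled separately with the chosen convention on the trivial character at $0$; verifying that these boundary contributions are absorbed correctly is the technical heart of the computation.
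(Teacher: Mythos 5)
Your proposal is correct and follows essentially the same route as the paper's proof: the inverse-free two-variable formulation of the boomerang system, reduction to $a=1$ by homogeneity, double additive-character orthogonality with factorization of the $(x,y)$-sum into $S_{\alpha,\beta}\,S_{-\alpha c,-\beta c^{-1}}$, separation of the $(\alpha,\beta)$-sum into the $(0,0)$ term (the constant $1$), the single-zero terms (the two $c$-DDT contributions), and the $\alpha\beta\neq 0$ main term, followed by the multiplicative-character/Gauss-sum expansion of $S_{\alpha,\beta}$. If anything, you are more explicit than the paper on two points it passes over silently, namely the rescaling that absorbs the parameter $a$ (the paper works directly at $a=1$) and the exceptional points $x=0,-1$ in the inversion formula $\chi_1(z)=\frac{1}{q-1}\sum_{j} G(\bar\psi_j,\chi_1)\psi_j(z)$, which is valid only for $z\neq 0$.
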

\begin{proof}
For $b \neq 0$ and fixed $c\neq 1$, the $c$-boomerang uniformity of $x^d$ is given by $\displaystyle \max_{b \in \mathbb{F}_{p^n}^*}~  _cB_F(1,b) $, where $_cB_F(1,b)$ is the number of solutions in $\mathbb{F}_{q}\times \mathbb{F}_{q}$, $q=p^n$, of the following system
\begin{equation}
\label{eq:eq3.1}
\begin{cases}
 x^d-cy^d=b \\
(x+1)^d-c^{-1} (y+1)^d=b.
\end{cases}
\end{equation}
We know (and easy to argue)  that the number  $N(b)$  of solutions $(x_1,\ldots,x_n)\in\F_q^n$, for $b$ fixed, of an equation $f(x_1,\ldots,x_n)=b$ is
\begin{equation}
\begin{split}
\label{eq:char_eq}
\cN(b)
&= \frac{1}{q}\sum_{x_1,\ldots,x_n\in \F_q}\sum_{\alpha\in\F_q} \chi_1\left(\alpha \left( f(x_1,\ldots,x_n)- b\right)\right)\\
&=\frac{1}{q}\sum_{x_1,\ldots,x_n\in \F_q}\sum_{\chi\in \widehat{\F_q}}\chi(f(x_1,\ldots,x_n))\overline{\chi(b)},
\end{split}
\end{equation}
where $\widehat{\F_q}$ is the set of all additive characters of $\F_q$, and $\chi_1$ is the principal additive character of  $\F_q$. Next, note  that the number of solutions  $(x_1,\ldots,x_n)\in\F_q^n$ of a system $f_1(x_1,\ldots,x_n)=b_1$,  $f_2(x_1,\ldots,x_n)=b_2$ is exactly
\[
\frac{1}{q^2}\sum_{x_1,\ldots,x_n\in \F_q}\sum_{\alpha,\beta\in\F_q} \chi_1\left(\alpha\left(f_1(x_1,\ldots,x_n)-b_1 \right) \right) \chi_1\left(\beta\left(f_2(x_1,\ldots,x_n)-b_2 \right) \right).
\]
For our system~\eqref{eq:eq3.1}, we see that the number of solutions for some $a,b$ fixed is therefore
\allowdisplaybreaks
\begin{align*}
&\cN_{b;c}
=\frac{1}{q^2}\sum_{x,y\in \F_q}\sum_{\alpha,\beta\in\F_q} \chi_1\left(\alpha\left(x^d-cy^d-b \right) \right) \chi_1\left(\beta\left((x+1)^d-c^{-1} (y+1)^d-b \right) \right)\\
&=\frac{1}{q^2}\sum_{x,y\in \F_q}\sum_{\alpha,\beta\in\F_q} \chi_1(-b(\alpha+\beta)) \chi_1\left(\alpha x^d+\beta(x+1)^d\right) \overline{ \chi_1\left(\alpha cy^d+ \beta c^{-1} (y+1)^d  \right)}\\
&= \frac{1}{q^2}\sum_{\alpha,\beta\in\F_q} \chi_1(-b(\alpha+\beta))
\sum_{x\in \F_q}
\chi_1\left(\alpha x^d+\beta(x+1)^d\right) \sum_{y\in \F_q} \overline{ \chi_1\left(\alpha cy^d+ \beta c^{-1} (y+1)^d  \right)}.
\end{align*}
We now, rewrite the above expression as (the term $q^2$ comes from $\alpha=\beta=0$)
\allowdisplaybreaks
\begin{align*}
&q^2\cN_{b;c}-q^2= \sum_{\alpha\in\F_q,\beta=0} \chi_1(-b \alpha)\sum_{x\in \F_q}
\chi_1\left(\alpha x^d\right) \sum_{y\in \F_q}   \chi_1\left(-\alpha cy^d \right)\\
&\qquad +\sum_{\alpha=0,\beta\in\F_q} \chi_1(-b \beta)
\sum_{x\in \F_q}
\chi_1\left(\beta(x+1)^d\right) \sum_{y\in \F_q}  \chi_1\left(-\beta c^{-1} (y+1)^d  \right)\\
&\qquad +\sum_{\alpha,\beta\in\F_q,\alpha\beta\neq 0} \chi_1(-b(\alpha+\beta))
\sum_{x\in \F_q}
\chi_1\left(\alpha x^d+\beta(x+1)^d\right) \\
&\qquad\qquad\qquad\qquad\qquad\qquad\cdot  \sum_{y\in \F_q} \overline{ \chi_1\left(\alpha cy^d+ \beta c^{-1} (y+1)^d  \right)}\\
&=\sum_{x,y\in \F_q} \sum_{\alpha\in\F_q}\chi_1\left(\alpha\left( x^d-cy^d-b  \right) \right)\\
&\qquad +\sum_{x,y\in \F_q} \sum_{\beta\in\F_q}\chi_1\left(\beta\left( (x+1)^d-c^{-1}(y+1)^d-b  \right) \right)\\ 
&\qquad +\sum_{\alpha,\beta\in\F_q,\alpha\beta\neq 0} \chi_1(-b(\alpha+\beta))
\sum_{x\in \F_q}
\chi_1\left(\alpha x^d+\beta(x+1)^d\right)\\
&\qquad\qquad\qquad\qquad\qquad\qquad\cdot  \sum_{y\in \F_q} \overline{ \chi_1\left(\alpha cy^d+ \beta c^{-1} (y+1)^d  \right)}\\
&=\sum_{x,y\in \F_q} \sum_{\alpha\in\F_q}\chi_1\left(\alpha\left( x^d-cy^d-b  \right) \right)+\sum_{x,y\in \F_q} \sum_{\beta\in\F_q}\chi_1\left(\beta\left( x^d-c^{-1}y^d-b  \right) \right)\\ 
&\qquad +\sum_{\alpha,\beta\in\F_q,\alpha\beta\neq 0} \chi_1(-b(\alpha+\beta))
\sum_{x\in \F_q}
\chi_1\left(\alpha x^d+\beta(x+1)^d\right)\\
&\qquad\qquad\qquad\qquad\qquad\qquad\cdot  \sum_{y\in \F_q} \overline{ \chi_1\left(\alpha cy^d+ \beta c^{-1} (y+1)^d  \right)}\\
&=q\left( {_c}\Delta_{F}(1,b)+ {_{c^{-1}}}\Delta_{F}(1,b) \right)+\sum_{\alpha,\beta\in\F_q,\alpha\beta\neq 0} \chi_1(-b(\alpha+\beta))\, S_{\alpha,\beta}\, S_{-\alpha c,-\beta c^{-1}},
\end{align*}
where $S_{\alpha,\beta} =\sum_{x\in \F_q} \chi_1\left(\alpha x^d+\beta(x+1)^d\right)$ (the last identity, involving the $c$-DDT entries, follows from Equation~\eqref{eq:char_eq}).

Further,~\cite[Equation (5.17)]{LN97} relates the additive character $\chi$ to the cyclic group (of cardinality $q-1$) of all multiplicative characters $\psi$ of $\F_q$, via
\begin{align*}
\chi_1(w)&=\frac{1}{q-1} \sum_{z\in\F_q^*} \chi_1(z)\sum_{j=0}^{q-2} \psi_j(w) \overline{\psi_j(z)}
=\frac{1}{q-1} \sum_{j=0}^{q-2} G(\bar\psi_j,\chi_1) \psi_j(w),
\end{align*}
where $G$ is the Gauss' sum $\displaystyle G(\psi,\chi)=\sum_{z\in\F_q^*} \psi(z)\chi(z)$.
Using this, we get
\allowdisplaybreaks
\begin{align*}
S_{\alpha,\beta}&=\sum_{x\in \F_q} \chi_1\left(\alpha x^d\right)\chi_1\left(\beta(x+1)^d\right)\\
&=\frac{1}{(q-1)^2}  \sum_{x\in \F_q} \sum_{j,k=0}^{q-2} G(\bar\psi_j,\chi_1) G(\bar\psi_k,\chi_1)\psi_j\left(\alpha x^d\right)  \psi_k\left(\beta(x+1)^d\right),
\end{align*}
from which we infer our last identity.
\end{proof}

\begin{rem}
In the previous theorem and the next ones, we could have embedded the differential entries ${_c}\Delta_{F}(1,b),{_{c^{-1}}}\Delta_{F}(1,b)$ into the character sums, but we wanted to point out how the $c$-BCT entries depend upon the $c$-DDT entries.
\end{rem}
 \begin{rem}
 Surely, we could have written the previous theorem for any function $F$, but we simply wanted it for the Gold functions from the next section. We may come back to that idea for other functions. Also, the case $p=2$ will be treated in a separate paper as the Weil sums results are rather different.
 \end{rem}

\section{The $c$-BCT for all Gold functions $x^{p^k+1}$, $p$ odd}
\label{sec4}
We will now use this approach to push  even further the above result for the Gold function. It is perhaps the first result of this type that computes the boomerang uniformity and its relative, the $c$-boomerang uniformity for all functions in this class (we gave a lower bound in~\cite{S20}
the $c$-boomerang uniformity). 
We will, in fact, find all entries in the $c$-BCT, including $c=1$, as well,  for all $c\neq 0$.

We shall make use of the following results from Coulter~\cite{Co98_1,Co98} (we make slight changes in notations and combine various results), who generalized a result of Carlitz~\cite{Car80}. Let $1\leq k<n$, $e=\gcd(n,k)$, and $\mathscr{S}_k(A,B)=\sum_{x\in \F_q} \chi_1\left(A x^{p^k+1}+ B x\right)$. We let $\eta=\psi_{(q-1)/2}$ be the quadratic character of $\F_q$.
\begin{thm}[\textup{\cite{Co98_1}}]
\label{thm:Co98_1}
Let $q=p^n$, $n=2m\geq 2$, $p$ and odd prime, $1\leq k<n$, $e=\gcd(n,k)$. Then:
\begin{enumerate}
\item[$(1)$] If $n/d$ is odd, then 
\[
\mathscr{S}_k(A,0)=
\begin{cases}
(-1)^{n-1}p^m \eta(A) &\text{ if } p\equiv 1\pmod 4\\
(-1)^{n-1}p^m i^n \eta(A) &\text{ if } p\equiv 3\pmod 4.
\end{cases}
\]
\item[$(2)$] If $n/d$ is even, then
\[
\mathscr{S}_k(A,0)=
\begin{cases}
 (-1)^{\frac{m}{e}}\, p^m\,  &\text{ if } A^{\frac{q-1}{p^e+1}}\neq (-1)^{\frac{m}{e}}\\ 
 (-1)^{\frac{m}{e}+1}\, p^{m+e}\,&\text{ if } A^{\frac{q-1}{p^e+1}}= (-1)^{\frac{m}{e}}. 
\end{cases}
\]
\end{enumerate}
\end{thm}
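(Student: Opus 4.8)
The plan is to recognize $\mathscr{S}_k(A,0)=\sum_{x\in\F_q}\chi_1\left(Ax^{p^k+1}\right)$ as the Weil sum attached to a quadratic form and to reduce the problem to a rank-and-type computation. Viewing $\F_q$ as an $n$-dimensional space over $\F_p$, the map $Q_A(x)=\Tr\left(Ax^{p^k+1}\right)$ takes values in $\F_p$; since $(x+y)^{p^k+1}-x^{p^k+1}-y^{p^k+1}=x^{p^k}y+xy^{p^k}$ is additive in each argument, $Q_A$ is a quadratic form. First I would polarize, obtaining the symmetric bilinear form $B_A(x,y)=\Tr\left(A(x^{p^k}y+xy^{p^k})\right)$, and then use the $p^{n-k}$-invariance of the trace to rewrite it as $B_A(x,y)=\Tr\!\bigl(y\,(Ax^{p^k}+A^{p^{n-k}}x^{p^{n-k}})\bigr)$. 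Because the trace pairing is nondegenerate, the radical of $Q_A$ is exactly the kernel of the linearized polynomial $L_A(x)=Ax^{p^k}+A^{p^{n-k}}x^{p^{n-k}}$; raising to the $p^k$-th power shows that its nonzero roots satisfy $x^{p^{2k}-1}=-A^{1-p^k}$.

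The second step is to invoke the classical evaluation of Weil sums of quadratic forms over $\F_p$ (as in \cite[Ch.~5]{LN97}): if $Q_A$ has $\F_p$-rank $r$, then $\mathscr{S}_k(A,0)=\eta_p(\Delta)\,G^{\,r}\,p^{\,n-r}$ up to the conventions of the quadratic Gauss sum $G$, so that $|\mathscr{S}_k(A,0)|$ is fixed by $r$ while the precise value is governed by the discriminant $\Delta$ (equivalently the hyperbolic/elliptic type) of the nondegenerate part. Here $r=n-\dim_{\F_p}\ker L_A$, and since $x\mapsto x^{p^{2k}-1}$ has kernel of size $p^{\gcd(2k,n)}-1$, the radical is either trivial or of dimension $\gcd(2k,n)$. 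The arithmetic identity $\gcd(2k,n)=e$ when $n/e$ is odd and $\gcd(2k,n)=2e$ when $n/e$ is even now splits the analysis exactly into the two stated cases: in the first the radical forces rank $n$ (giving $|\mathscr{S}_k|=p^m$), in the second it may carry an extra degenerate subspace of dimension $2e$ (giving $|\mathscr{S}_k|=p^{m+e}$).

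I would finish by pinning down the signs. In case~(1) I expect to show that $L_A$ has no nonzero root for any $A\neq 0$, so $Q_A$ is nondegenerate of even rank $n=2m$; then $G^{\,n}$ equals $p^m$ for $p\equiv 1\pmod 4$ and $i^n p^m$ for $p\equiv 3\pmod 4$, which accounts for the factor $i^n$, and the remaining $(-1)^{n-1}\eta(A)$ comes from computing the discriminant of the trace form and matching $\eta_p(\Delta)$ against the quadratic character $\eta$ of $\F_q$. The main obstacle is case~(2): one must decide, for each $A$, whether $-A^{1-p^k}$ is a $(p^{2k}-1)$-th power — a condition governed precisely by $A^{(q-1)/(p^e+1)}$ — and then compute the \emph{type} of the nondegenerate part to recover the exact sign $(-1)^{m/e}$ rather than merely $|\mathscr{S}_k|$. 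Carrying out this type computation, i.e.\ translating the solvability condition $A^{(q-1)/(p^e+1)}=(-1)^{m/e}$ into the statement that $Q_A$ is hyperbolic versus elliptic with the correct power of $p^e$, is where the real work lies; I would reconstruct it by restricting the form to the subfield $\F_{p^{2e}}$ fixed by $x\mapsto x^{p^{2k}}$, where $x^{p^k+1}$ becomes a norm-type form that can be diagonalized explicitly, essentially following Carlitz~\cite{Car80} and Coulter~\cite{Co98_1}.
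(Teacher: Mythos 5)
The first thing to note is that the paper contains no proof of this statement: it is imported, with attribution, from Coulter \cite{Co98_1} (it combines Theorems 1 and 2 of that paper, with two slips in transcription --- ``$n/d$'' should read ``$n/e$'', since with the paper's later convention $d=\gcd(2k,n)$ the parity of $n/d$ would not even distinguish the two cases correctly, and the hypothesis $n=2m$ is only needed in part (2)). So there is no internal proof to compare yours against; the natural benchmark is Coulter's own argument, which is multiplicative rather than quadratic-form theoretic: since the multiset of values of $x\mapsto x^{p^k+1}$ on $\F_q$ coincides with that of $x\mapsto x^{\delta}$ for $\delta=\gcd(p^k+1,q-1)$, and $\delta=2$ or $p^e+1$ according as $n/e$ is odd or even, the sum $\mathscr{S}_k(A,0)$ collapses either to the ordinary quadratic Gauss sum $\eta(A)G(\eta,\chi_1)$, whose classical evaluation \cite[Thm.~5.15]{LN97} supplies the factors $(-1)^{n-1}$, $i^n$, $\eta(A)$, or to a combination of Gauss sums of multiplicative characters of order dividing $p^e+1$; these are semiprimitive (``pure'') and evaluated by Stickelberger-type results, the condition $A^{(q-1)/(p^e+1)}=(-1)^{m/e}$ recording which combination occurs.

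Your route --- treating $Q_A(x)=\Tr\left(Ax^{p^k+1}\right)$ as an $\F_p$-quadratic form and running the rank/discriminant machinery --- is genuinely different, and the reductions you do carry out are correct: the polarization, the identification of the radical with the kernel of $A^{p^k}x^{p^{2k}}+Ax$, the dichotomy $\gcd(2k,n)=e$ or $2e$ according to the parity of $n/e$, and your expectation in case (1) that the kernel is trivial (indeed, $(q-1)/(p^e-1)$ is a sum of $n/e$ odd terms, hence odd, so $\left(-A^{1-p^k}\right)^{(q-1)/(p^e-1)}=-1\neq 1$ and $x^{p^{2k}-1}=-A^{1-p^k}$ has no solution). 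This approach buys a structural explanation of the two possible magnitudes $p^m$ and $p^{m+e}$, which in Coulter's computation appear only at the end.

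The genuine gap is that the two sign determinations you defer are not routine bookkeeping; they are essentially the entire content of the theorem. In case (1) you still owe the discriminant of $Q_A$ as an $n$-ary form over $\F_p$ for general $k$, and computing the determinant of the Gram matrix of $(x,y)\mapsto\Tr\left(A\left(x^{p^k}y+xy^{p^k}\right)\right)$ directly is no easier than the theorem itself; the painless fix is the value-multiset observation above, which for $n/e$ odd lets you replace $x^{p^k+1}$ by $x^2$ outright and reduces case (1) to the known evaluation of $\sum_{x}\chi_1\left(Ax^2\right)$, with no discriminant computation. In case (2), knowing the radical (hence the rank) only pins down $\absval{\mathscr{S}_k(A,0)}$; the type of the nondegenerate part is \emph{not} determined by restricting $Q_A$ to the subfield $\F_{p^{2e}}$, since a quadratic form is not determined by its restriction to a subspace. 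To finish along your lines you need either an orthogonal decomposition of $\F_{p^n}$ into pieces on which the form is explicitly a norm-type form, or an induction on $m/e$ --- which is essentially Carlitz's argument \cite{Car80} for $k=e$ combined with the multiset reduction from general $k$ to $k=e$. As written, the step ``compute the type'' is asserted rather than proved, so the proposal is a sound plan whose crux remains open rather than a complete proof.
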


\begin{thm}[\textup{\cite{Co98}}]
\label{thm:Co98}
Let $q=p^n$, $n\geq 2$, $p$ an  odd prime, $1\leq k<n$, $e=\gcd(n,k)$. Let $f(x)=A^{p^k} x^{p^{2k}}+Ax$, for some nonzero $A$. The following statements hold:
\begin{enumerate}
\item[$(1)$]
If $f$ is a permutation polynomial over $\F_q$, and  $x_0$ is the unique element such that $f(x_0)=-B^{p^k},B\neq 0$, then:
\begin{itemize}
\item[$(i)$] If $\frac{n}{e}$ is odd, then 
\[
\mathscr{S}_k(A,B)=
\begin{cases}
(-1)^{n-1} \sqrt{q}\,\eta(-A)\,\overline{\chi_1(Ax_0^{p^k+1})} &\text{ if } p\equiv 1\pmod 4\\
(-1)^{n-1} \imath^{3n} \sqrt{q}\,\eta(-A)\,\overline{\chi_1(Ax_0^{p^k+1})} &\text{ if } p\equiv 3\pmod 4.
\end{cases}
\]
\textup{(}where the solution $x_0=-\frac{1}{2}\sum_{j=0}^{\frac{n}{e}-1} (-1)^j A^{-\frac{p^{(2j+1)k}+1}{p^k+1}} B^{p^{(2j+1)k}}$\textup{)}.
\item[$(ii)$] If $\frac{n}{e}$ is even, then $n=2m$, $A^{\frac{q-1}{p^e+1}}\neq (-1)^{\frac{m}{e}}$ and
\[
\mathscr{S}_k(A,B)=(-1)^{\frac{m}{e}} p^m\, \overline{\chi_1(Ax_0^{p^k+1})}.
\]
\end{itemize}
\item[$(2)$] If $f$ is not a permutation polynomial, then, for $B\neq 0$, $\mathscr{S}_k(A,B)=0$, unless, $f(x)=-B^{p^k}$ has a solution $x_0$ \textup{(}this can only happen if $\frac{n}{e}$ is even with $n = 2m$, and $A^{\frac{q-1}{p^e+1}}= (-1)^{\frac{m}{e}}$\textup{)}, in which case
\[
\mathscr{S}_k(A,B)=(-1)^{\frac{m}{e}+1}p^{m+e} \overline{\chi_1(Ax_0^{p^k+1})}.
\]
\end{enumerate}
\end{thm}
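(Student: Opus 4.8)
The plan is to treat $\mathscr{S}_k(A,B)$ as the Gauss sum of the quadratic form $Q(x)=\Tr(Ax^{p^k+1})$ twisted by the linear character $x\mapsto\chi_1(Bx)$, and to remove the linear term by completing the square, thereby reducing everything to the pure quadratic sum $\mathscr{S}_k(A,0)$ already evaluated in Theorem~\ref{thm:Co98_1}. First I would compute the symmetric bilinear form attached to $Q$: a short Frobenius-invariance calculation gives
\[
Q(x+y)-Q(x)-Q(y)=\Tr\bigl(y\,g(x)\bigr),\qquad g(x)=Ax^{p^k}+A^{p^{n-k}}x^{p^{n-k}},
\]
and since $g(x)^{p^k}=A^{p^k}x^{p^{2k}}+Ax=f(x)$, the radical of the form is exactly $\ker f$. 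In particular $f$ is a permutation polynomial iff the form is nondegenerate, which is the dichotomy driving the two main cases (recall $f$ permutes $\F_q$ precisely when $n/e$ is odd, or $n/e$ is even with $A^{(q-1)/(p^e+1)}\neq(-1)^{m/e}$).

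For the completing-the-square step I would substitute $x\mapsto x+x_0$ and isolate the part of $Q(x+x_0)+\Tr(B(x+x_0))$ that is linear in $x$; by symmetry of the form it equals $\Tr\bigl(x(g(x_0)+B)\bigr)$, which vanishes identically precisely when $g(x_0)=-B$, equivalently $f(x_0)=-B^{p^k}$. When $f$ is a permutation this $x_0$ is unique, and I would verify that the displayed closed form solves $f(x_0)=-B^{p^k}$ by a telescoping computation using $n/e$ odd. Collecting the constant term and using $B=-g(x_0)$ together with the Frobenius identity $\Tr(A^{p^{n-k}}x_0^{p^{n-k}+1})=\Tr(Ax_0^{p^k+1})$, one gets $\Tr(Ax_0^{p^k+1}+Bx_0)=-\Tr(Ax_0^{p^k+1})$, so that
\[
\mathscr{S}_k(A,B)=\overline{\chi_1\bigl(Ax_0^{p^k+1}\bigr)}\,\mathscr{S}_k(A,0).
\]
Substituting Theorem~\ref{thm:Co98_1} (and the classical odd-$n$ evaluation of the pure quadratic sum, which supplies the $\imath^{3n}$ and $\eta(-A)$ normalizations) then yields cases (1)(i) and (1)(ii), after checking $\eta(-1)=1$ for even $n$.

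It remains to treat the non-permutation case and to establish the vanishing. Here I would exploit shift invariance: for every $z\in\ker f$, replacing $x$ by $x+z$ shows $\mathscr{S}_k(A,B)=\chi_1\bigl(Az^{p^k+1}+Bz\bigr)\,\mathscr{S}_k(A,B)$. Because the bilinear form is trivial on its radical, $z\mapsto\Tr(Az^{p^k+1})$ is additive there, and for odd $p$ the homogeneity $\Tr(A(\lambda z)^{p^k+1})=\lambda^2\Tr(Az^{p^k+1})$ forces this to vanish on $\ker f$; hence the obstructing character reduces to $z\mapsto\chi_1(\Tr(Bz))$. Since $g$ is self-adjoint for the trace form, $\mathrm{im}(g)=(\ker f)^{\perp}$, so this character is trivial on $\ker f$ iff $-B\in\mathrm{im}(g)$ iff $f(x_0)=-B^{p^k}$ is solvable; otherwise $\mathscr{S}_k(A,B)=0$. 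When it is solvable, completing the square goes through verbatim (the phase $\overline{\chi_1(Ax_0^{p^k+1})}$ is independent of the chosen $x_0$), and Theorem~\ref{thm:Co98_1}(2) gives the enhanced value $p^{m+e}$; a parallel squaring argument $|\mathscr{S}_k(A,B)|^2=q\,|\ker f|$ with $|\ker f|=p^{2e}$ corroborates the magnitude.

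I expect the main obstacle to be the exact phase rather than the magnitude: the squaring trick pins down $|\mathscr{S}_k|$ cheaply, but determining the precise constant---the signs $(-1)^{n-1}$, $(-1)^{m/e}$, the fourth-root-of-unity factor $\imath^{3n}$ when $p\equiv3\pmod 4$, and the quadratic-character twist $\eta(-A)$---rests on the delicate Gauss-sum evaluation underlying Theorem~\ref{thm:Co98_1} and on verifying the explicit formula for $x_0$; this sign/phase bookkeeping is the technical heart of the Carlitz--Coulter argument.
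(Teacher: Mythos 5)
The paper offers no proof of this statement: Theorem~\ref{thm:Co98} is imported, with minor notational changes, from Coulter~\cite{Co98}, so the only meaningful comparison is with that source rather than with an in-paper argument. Your proposal is correct and reconstructs essentially the same argument that underlies the cited result: the quadratic-form/completing-the-square reduction $\mathscr{S}_k(A,B)=\overline{\chi_1\left(Ax_0^{p^k+1}\right)}\,\mathscr{S}_k(A,0)$ via $x\mapsto x+x_0$ (your constant-term bookkeeping is right, since $\Tr(Bx_0)=-2\,\Tr\left(Ax_0^{p^k+1}\right)$ by the Frobenius identity $\Tr\left(A^{p^{n-k}}x_0^{p^{n-k}+1}\right)=\Tr\left(Ax_0^{p^k+1}\right)$), the shift-invariance vanishing over $\ker f$ when $f(x)=-B^{p^k}$ is unsolvable (your additivity-versus-homogeneity argument for $Q$ vanishing on the radical is valid for odd $p$, and $\mathrm{im}(g)=(\ker f)^{\perp}$ follows from self-adjointness as you say), and the appeal to the $B=0$ evaluations of Theorem~\ref{thm:Co98_1} for the final constants. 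The sign conversions you defer do check out---$\eta(-A)\,\imath^{3n}=\eta(A)\,\imath^{n}$ when $p\equiv 3\pmod 4$, and $\eta(-A)=\eta(A)$ when $p\equiv 1\pmod 4$---so your reduction reproduces the stated formulas exactly; the only ingredients you recall rather than prove (the permutation criterion for $f$ in terms of $n/e$ and $A^{(q-1)/(p^e+1)}$, and Theorem~\ref{thm:Co98_1} itself) are precisely the other Coulter results the paper also quotes without proof, so relying on them is legitimate.
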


The proof of our results are long and complicated, so we will split the analysis into several cases $c=1,c=-1$, etc., and record each case in a separate theorem. The goal in each case is to make more explicit the expressions of Theorem~\ref{thm:cBU_Char} for the Gold functions.

We  need some notations below.  For $1\leq k<n$, let $\alpha,\beta\in\F_{p^n}$, $L_{\alpha,\beta}(x)=(\alpha+\beta)x^{p^{2k}}+(\beta^{p^{n-k}}+\beta)x$ and $\sY_1$, $\sY_2$ be the set of $(\alpha,\beta)\in\F_{p^n}^{*2}$, where $L_{\alpha,\beta}$, respectively,   $L_{-\alpha c,-\beta c^{-1}}$   are not permutations.

Further, let $\mathscr{A}_1$ be the set of all $(\alpha,\beta)\in\F_{p^n}^{*2}$ such that $L_{\alpha,\beta}(x)=-(\beta+\beta^{p^k})$ has a root $x_{\alpha,\beta}$,  and
$\alpha,\beta$ satisfy (with $d=\gcd(2k,n)$)
\[
(-1)^{\frac{n}{d}}\left(\frac{\beta^{p^{n-k}}+\beta}{\alpha+\beta} \right)^{\frac{p^n-1}{p^d-1}}=1,
\]
(hence, $L_{\alpha,\beta}$ is not a permutation polynomial~\cite{ZWW20}). Observe that the left hand side expression is just the relative norm from $\F_{p^n}$ to $\F_{p^d}$ of the argument.
Similarly, let $\sA_2$ be the set of all $(\alpha,\beta)\in\F_{p^n}^{*2}$ such that $L_{-\alpha c,-\beta c^{-1}}(x)=(\beta c^{-1}+(\beta c^{-1})^{p^k})$ has a root $x_{-\alpha c,-\beta c^{-1}}$,  and
$\alpha,\beta$ satisfy  
\[
(-1)^{\frac{n}{d}}\left(\frac{(c^{-1}\beta)^{p^{n-k}}+c^{-1}\beta}{\alpha c+\beta c^{-1}} \right)^{\frac{p^n-1}{p^d-1}}=1
\]
(hence, $L_{-\alpha c,-\beta c^{-1}} $ is not a permutation polynomial).

We showed in~Theorem~\ref{thm:cBU_Char} that ${_c}B_F(a,ab)$ equals (we let $q=p^n$)
\begin{equation}
\label{eq:eq32}
  \frac{1}{q}\left({_c}\Delta_{F}(1,b)+{_{c^{-1}}}\Delta_{F}(1,b)\right)+1 +\frac1{q^2}\sum_{\alpha,\beta\in\F_q,\alpha\beta\neq 0} \chi_1(-b(\alpha+\beta))\, S_{\alpha,\beta}\, S_{-\alpha c,-\beta c^{-1}},
\end{equation}
where
$
S_{\alpha,\beta}=\sum_{x\in \F_q} \chi_1\left(\alpha x^d\right)\chi_1\left(\beta(x+1)^d\right)$, and $d=p^k+1$. We let $\displaystyle T_b=\sum_{\alpha,\beta\in\F_q,\alpha\beta\neq 0} \chi_1(-b(\alpha+\beta))\, S_{\alpha,\beta}\, S_{-\alpha c,-\beta c^{-1}}$.

We now  concentrate on $S_{\alpha,\beta}$, for $\alpha\beta\neq 0$. Using the fact that $\chi_1(u^p)=\chi_1(u)$ for $u\in\F_q$, we compute
\allowdisplaybreaks
\begin{align*}
S_{\alpha,\beta}
&=\sum_{x\in \F_q} \chi_1\left(\alpha x^{p^k+1}+\beta(x+1)^{p^k+1}\right)\\
&=\sum_{x\in \F_q} \chi_1\left((\alpha+\beta)  x^{p^k+1}+ \beta x^{p^k} +\beta x+\beta)\right)\\
&=\sum_{x\in \F_q} \chi_1\left((\alpha+\beta)  x^{p^k+1}\right) \chi_1\left( (\beta^{p^{n-k}} x)^{p^k}\right)\chi_1( \beta x+\beta)\\
&=\sum_{x\in \F_q} \chi_1\left((\alpha+\beta)  x^{p^k+1}\right) \chi_1\left( \beta^{p^{n-k}} x\right)\chi_1( \beta x+\beta)\\
&=\sum_{x\in \F_q} \chi_1\left((\alpha+\beta)  x^{p^k+1}\right) \chi_1\left( (\beta^{p^{n-k}} +\beta) x+\beta\right)\\
&=\chi_1(\beta) \sum_{x\in \F_q} \chi_1\left((\alpha+\beta)  x^{p^k+1}+ (\beta^{p^{n-k}} +\beta) x\right).
\end{align*}
Let  $A=\alpha+\beta, B=\beta^{p^{n-k}}+\beta$ (recall $\alpha\beta\neq 0$). 
If $\alpha=-\beta=\beta^{p^{n-k}}$ (the last identity can only happen for  $\frac{n}{e}$ even, where $e=\gcd(n,k)$), then  $S_{\alpha,\beta}=q\chi_1(\beta)$ (there are $p^e-1$ such nonzero $\beta$'s, since $\beta\neq 0,\beta^{p^{n-k}}+\beta=0$ is equivalent to $\beta^{p^k-1}+1=0$). If $\alpha=-\beta\neq \beta^{p^{n-k}}$, then $S_{\alpha,\beta}=0$. If   $\alpha\neq -\beta= \beta^{p^{n-k}}$, then we  use~\cite[Theorem 1 and 2]{Co98_1} (observe that the case $\frac{n}{e}$ odd does not happen), obtaining that when  $\frac{n}{e}$ is even (thus, $n=2m$), then  (with $A=\alpha+\beta$; we simplify a bit the original statement)
 \[
S_{\alpha,\beta}=
\begin{cases}
 (-1)^{\frac{m}{e}}\, p^m\,\chi_1(\beta) &\text{ if } A^{\frac{q-1}{p^e+1}}\neq (-1)^{\frac{m}{e}}\\ 
 (-1)^{\frac{m}{e}+1}\, p^{m+e}\,\chi_1(\beta) &\text{ if } A^{\frac{q-1}{p^e+1}}= (-1)^{\frac{m}{e}}. 
\end{cases}
\]

We next assume that $\alpha\neq -\beta\neq \beta^{p^{n-k}}$.
We shall now be using Theorem~\ref{thm:Co98},
which gives explicitly the sum $\mathscr{S}_k(A,B)=\sum_{x\in \F_q} \chi_1\left(A x^{p^k+1}+ B x\right)$ depending upon whether $L_{\alpha,\beta}(x)=A^{p^k}x^{p^{2k}}+Ax$  is a permutation polynomial or not. 

Now, it is known that a linearized polynomial of the form $L_r(x)=x^{p^r}+\gamma x\in\F_{p^n}$ is a permutation polynomial if and only if the relative norm $N_{\F_{p^n}/\F_{p^d}}(\gamma)\neq 1$, that is, $(-1)^{n/d} \gamma^{(p^n-1)/(p^d-1)}\neq 1$, where $d=\gcd(n,r)$. For our polynomial $L_{\alpha,\beta}(x)=(\alpha+\beta)x^{p^{2k}}+(\beta^{p^{n-k}}+\beta)x$, (dividing by $\alpha+\beta\neq 0$) the previous nonpermutability condition becomes (with $d=\gcd(2k,n)$)
\begin{equation}
\label{eq:pp_eq}
(-1)^{\frac{n}{d}}\left(\frac{\beta^{p^{n-k}}+\beta}{\alpha+\beta} \right)^{\frac{p^n-1}{p^d-1}}=1.
\end{equation}
Surely, there are  $\frac{p^n-1}{p^d-1}$ roots for the equation $x^{\frac{p^n-1}{p^d-1}}=(-1)^{\frac{n}{d}}$, forming a set $\sX_1$ of cardinality $\frac{p^n-1}{p^d-1}$. We then see that for an arbitrary $\gamma\in \sX_1$, and any $\beta\in\F_{p^n}$, then there is a unique $\alpha\in\F_{p^n}$ such that $\frac{\beta^{p^{n-k}}+\beta}{\alpha+\beta}=\gamma$, namely $\alpha=\frac{\beta^{p^{n-k}}+\beta-\beta \gamma}{\gamma}$. Therefore, there are $\leq \frac{p^n(p^n-1)}{p^d-1}$ pairs $(\alpha,\beta)$ forming a set $\sY_1$ (with the restrictions $\alpha\neq -\beta\neq \beta^{p^{n-k}}$)  such that $L_{\alpha,\beta}$ is not a permutation (the reason that the number of pairs is not precisely $ \frac{p^n(p^n-1)}{p^d-1}$ is because more than one $\beta$ may generate the same $\alpha$ if the linearized $x^{p^{n-k}}+(1-\gamma)x$ is not a permutation polynomial).   Let $\mathscr{A}_1\subseteq \sY_1$ be the subset of all $(\alpha, \beta)\in\sY_1$ such that $L_{\alpha,\beta}(x)=-(\beta+\beta^{p^k})$ has at least a root $x_{\alpha,\beta}$. 
Surely, if $L_{\alpha,\beta}$ is not a permutation on $\F_q$ and $L_{\alpha,\beta}(x)=-(\beta+\beta^{p^k})$ has no root, then  $S_{\alpha,\beta}=0$.

If $L_{\alpha,\beta}$ is not a PP (abbreviation of ``permutation polynomial''), but the  linearized equation has a root (hence, by Theorem~\ref{thm:Co98},   $\frac{n}{e}$ is even), then for all  $(\alpha,\beta)\in \sA_1$ (note that the cardinality of $|\sA_1|\leq \frac{p^n(p^n-1)}{p^d-1}$), then 
\[
S_{\alpha,\beta}=(-1)^{\frac{n}{2e}+1}p^{\frac{n}{2}+e} \chi_1(\beta) \overline{\chi_1\left((\alpha+\beta) x_{\alpha,\beta}^{p^k+1}\right)}.
\]

In addition, again, by Theorem~\ref{thm:Co98}, when $\frac{n}{e}$ is even   and $L_{\alpha,\beta}$ is a permutation on $\F_q$ and $x_{\alpha,\beta}$ is the root of $L_{\alpha,\beta}(x)=-(\beta+\beta^{p^k})$ then 
\[
S_{\alpha,\beta}=(-1)^{\frac{n}{2e}}p^{\frac{n}{2}}  \chi_1(\beta) \overline{\chi_1\left((\alpha+\beta) x_{\alpha,\beta}^{p^k+1}\right)}.
\]

 Finally, if  $\frac{n}{e}$ is odd and $L_{\alpha,\beta}$ is a permutation on $\F_q$ and $x_{\alpha,\beta}$ is the root of $L_{\alpha,\beta}(x)=-(\beta+\beta^{p^k})$, then $S_{\alpha,\beta}$ equals
\begin{align*}
(-1)^{n-1} \sqrt{q}\,\chi_1(\beta)\eta(-\alpha-\beta)\,\overline{\chi_1((\alpha+\beta)x_{\alpha,\beta}^{p^k+1})},\ &\text{ if } p\equiv 1\pmod 4\\
(-1)^{n-1} \imath^{3n} \sqrt{q}\,\chi_1(\beta)\eta(-\alpha-\beta)\,\overline{\chi_1((\alpha+\beta)x_{\alpha,\beta}^{p^k+1})},\ &\text{ if } p\equiv 3\pmod 4.
\end{align*}

We  take $\sX_2,\sY_2,\sA_2$ to be the corresponding sets as above, where $L_{\alpha,\beta}$ is replaced by $L_{-\alpha c,-\beta c^{-1}}$, etc. 

\subsection{The case $c=1$}

 \begin{thm}
\label{thm:c=1}
Let $F(x)=x^{p^k+1}$, $1\leq k<n$, be the Gold function on $\F_{p^n}$, $p$ and odd prime,  $n\geq 2$, and $c=1$. The Boomerang Connectivity Table entry of $F$ at $(a,ab$) is 
$\displaystyle _c\cB_F(a,ab)=\frac{2}{q}\Delta_{F}(1,b)+1 +\frac1{q^2}T_b$, where:
\begin{itemize}
\item[$(i)$] If $\frac{n}{e}$ is odd,   then  
\begin{align*}
T_b&=(-1)^{n\frac{p+1}{2}} p^n \sum_{(\alpha,\beta)\in\bar\sY_1} \chi_1(-b(\alpha+\beta)).
\end{align*}
\item[$(ii)$] If  $\frac{n}{e}$ even, then, with $A=\alpha+\beta$ and $\Sigma_1=\displaystyle \sum_{\substack{\alpha,\beta\in\F_q^*\\ A^{\frac{q-1}{p^e+1}}= (-1)^{\frac{m}{e}}}} \chi_1(-bA)$,
\allowdisplaybreaks
\begin{align*}
T_b=&\left(p^{2n+e}-2p^{2n}+p^n \right)
 +\left(p^{2n+2e}-p^{n+2e}-p^{2n}+p^n \right) \Sigma_1\\
& +p^{n+2e} \sum_{(\alpha,\beta)\in\sA_1}   \chi_1\left(-b(\alpha+\beta)\right)
+p^n\sum_{(\alpha,\beta)\in\bar \sY_1} \chi_1\left(-b(\alpha+\beta)\right).
\end{align*}
\end{itemize}

\end{thm}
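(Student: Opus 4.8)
The plan is to specialize Theorem~\ref{thm:cBU_Char} to $c=1$ and then evaluate the double Weil sum $T_b$ with Coulter's formulas (Theorems~\ref{thm:Co98_1} and~\ref{thm:Co98}). Since $c=1$ forces $c^{-1}=1$, the two $c$-DDT entries in~\eqref{eq:eq32} coincide, so the differential part collapses to $\frac{2}{q}\Delta_F(1,b)$ and the additive constant $+1$ is inherited verbatim; all the content sits in $T_b=\sum_{\alpha\beta\neq0}\chi_1(-b(\alpha+\beta))\,S_{\alpha,\beta}\,S_{-\alpha,-\beta}$. The first and decisive move is to notice that, for $c=1$, the second factor is the complex conjugate of the first: because $\chi_1(-u)=\overline{\chi_1(u)}$, one has $S_{-\alpha,-\beta}=\sum_{x}\overline{\chi_1(\alpha x^{d}+\beta(x+1)^{d})}=\overline{S_{\alpha,\beta}}$. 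Hence every summand carries the real, nonnegative weight $|S_{\alpha,\beta}|^{2}$, and $T_b=\sum_{\alpha\beta\neq0}\chi_1(-b(\alpha+\beta))\,|S_{\alpha,\beta}|^{2}$. This reduces the task from tracking two independent Coulter evaluations to computing a single magnitude, which depends only on the coarse classification of $(\alpha,\beta)$ already set up before the theorem.

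Writing $S_{\alpha,\beta}=\chi_1(\beta)\,\mathscr{S}_k(A,B)$ with $A=\alpha+\beta$, $B=\beta^{p^{n-k}}+\beta$, I would read off from Theorems~\ref{thm:Co98_1}--\ref{thm:Co98} that $|S_{\alpha,\beta}|^{2}=|\mathscr{S}_k(A,B)|^{2}$ takes only four values: it is $p^{2n}$ on the degenerate locus $A=B=0$ (the $p^{e}-1$ pairs with $\beta^{p^{n-k}}+\beta=0$ and $\alpha=-\beta$, occurring only when $n/e$ is even); it is $0$ when $A=0\neq B$ or when $L_{\alpha,\beta}$ is a non-permutation whose affine equation has no root; it is $p^{n+2e}$ in the heavy Coulter cases ($B=0$ with $A^{(q-1)/(p^{e}+1)}=(-1)^{m/e}$, or $B\neq0$ with $(\alpha,\beta)\in\sA_1$); and it is $p^{n}$ otherwise (the light $B=0$ case, or $B\neq0$ with $(\alpha,\beta)\in\bar\sY_1$). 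Thus $T_b$ becomes a sum of $\chi_1(-b(\alpha+\beta))$ over each stratum weighted by the corresponding power of $p$.

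For $n/e$ odd we have $d=\gcd(2k,n)=e$, the degenerate locus does not occur, the equation $\beta^{p^{n-k}}+\beta=0$ has no nonzero solution, and Theorem~\ref{thm:Co98} leaves only the permutation stratum, of weight $p^{n}$. Therefore $T_b=p^{n}\sum_{(\alpha,\beta)\in\bar\sY_1}\chi_1(-b(\alpha+\beta))$, which is already the claimed shape in part $(i)$. The displayed scalar $(-1)^{n(p+1)/2}$ is then pinned down by restoring the Gauss-sum normalizations in the product of the two Coulter prefactors, i.e.\ by collecting the value $\eta(-1)=(-1)^{n(p-1)/2}$ together with the quartic factor $\imath^{3n}$ governing the case $p\equiv3\pmod4$; this is a short but genuinely sign-sensitive computation, and it is the one place in part $(i)$ that demands care.

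For part $(ii)$, $n/e$ even ($d=2e$, $n=2m$), all four weights occur and the difficulty is purely combinatorial repackaging. First I would peel off the degenerate stratum: the $p^{e}-1$ pairs with $A=B=0$ all have $\alpha+\beta=0$, so $\chi_1(-b(\alpha+\beta))=1$, contributing $(p^{e}-1)p^{2n}$, which supplies the leading piece $p^{2n+e}-p^{2n}$ of the constant. The generic block $A\neq0\neq B$ contributes exactly $p^{n+2e}\sum_{\sA_1}\chi_1(-b(\alpha+\beta))+p^{n}\sum_{\bar\sY_1}\chi_1(-b(\alpha+\beta))$, matching the last two displayed terms. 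What remains is to show that the $B=0$, $A\neq0$ strata, split by the condition $A^{(q-1)/(p^{e}+1)}=(-1)^{m/e}$ into weights $p^{n+2e}$ and $p^{n}$, collapse---after using the orthogonality identity $\sum_{\alpha,\beta\in\F_q^{*}}\chi_1(-b(\alpha+\beta))=\bigl(\sum_{\gamma\in\F_q^{*}}\chi_1(-b\gamma)\bigr)^{2}=1$ for $b\neq0$ and the exact count $p^{e}-1$ of degenerate $\beta$'s---into the remaining constant $-2p^{2n}+p^{n}$ together with the $\Sigma_1$-term. The main obstacle I anticipate is precisely this final reconciliation: the $\Sigma_1$-condition is a norm condition on $A$ (exponent $(q-1)/(p^{e}+1)$, from Theorem~\ref{thm:Co98_1}), whereas the permutation condition~\eqref{eq:pp_eq} governing $\sA_1$ and $\bar\sY_1$ is a norm condition on $B/A$ (exponent $(p^{n}-1)/(p^{d}-1)$, $d=2e$); these are genuinely different, so apportioning the excess weight $p^{n+2e}-p^{n}$ across the strata and matching the exact polynomial coefficient of $\Sigma_1$ requires the precise cardinalities of the degenerate strata rather than the mere membership information $(\alpha,\beta)\in\sA_1$ or $\bar\sY_1$. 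That bookkeeping, not any conceptual step, is the crux of part $(ii)$.
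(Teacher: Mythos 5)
Your architecture coincides with the paper's: specialize Theorem~\ref{thm:cBU_Char} at $c=1$, stratify the pairs $(\alpha,\beta)$ along the Coulter cases (the degenerate locus $\alpha=-\beta=\beta^{p^{n-k}}$ with its $p^e-1$ points; the locus $\alpha=-\beta\neq\beta^{p^{n-k}}$ where $S_{\alpha,\beta}=0$; the locus $\alpha\neq-\beta=\beta^{p^{n-k}}$ split by whether $A^{(q-1)/(p^e+1)}=(-1)^{m/e}$; and the generic locus split into $\sA_1$, $\bar\sY_1$, and the zero-sum non-permutation case), then add the contributions. Your one new ingredient, $S_{-\alpha,-\beta}=\overline{S_{\alpha,\beta}}$, hence $S_{\alpha,\beta}S_{-\alpha,-\beta}=|S_{\alpha,\beta}|^2\ge 0$, is a genuine simplification the paper does not exploit: the paper instead multiplies the two Coulter evaluations and tracks the factors $(-1)^{n-1}$, $\eta(-A)\eta(A)=\eta(-1)$ and $\imath^{3n}$ by hand.

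That said, there are two concrete gaps. First, part (i) of your write-up is internally inconsistent. On $\bar\sY_1$ the odd-$\frac{n}{e}$ Coulter evaluation has modulus $\sqrt{q}$ for both residues of $p$ modulo $4$, so your reduction forces $S_{\alpha,\beta}S_{-\alpha,-\beta}=q$ and $T_b=p^n\sum_{(\alpha,\beta)\in\bar\sY_1}\chi_1(-b(\alpha+\beta))$ with coefficient exactly $+p^n$; the closing promise that the factor $(-1)^{n(p+1)/2}$ will be ``pinned down'' by a sign-sensitive computation is a step that cannot succeed, since that factor equals $-1$ when $p\equiv1\pmod{4}$ and $n$ is odd. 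You must commit: either find an error in the conjugation identity (there is none --- $\chi_1(-u)=\overline{\chi_1(u)}$ is forced), or conclude that the coefficient is $+p^n$ and flag that this disagrees with the stated theorem. Indeed, the paper's own proof produces its sign through a step amounting to $\eta(-A)\eta(A)=\eta(-1)=(-1)^n$ in the $p\equiv1\pmod 4$ case, whereas $\eta(-1)=1$ there; your route and the paper's genuinely diverge at exactly this point, and only one can be right. Second, part (ii) defers to ``bookkeeping'' precisely the computation that produces the theorem's coefficients, namely the paper's evaluation of $T_{b,2}$ and $T_{b,3}$: the heavy stratum contributes $p^{n+2e}$ times a multiple of $\Sigma_1$, the light stratum is resolved via $\sum_{A\neq 0,\ A^{(q-1)/(p^e+1)}\neq(-1)^{m/e}}\chi_1(-bA)=-1-\Sigma_1$, and the multiplicities (how many admissible $\beta$ accompany each admissible $A$) are what create $p^{2n+2e}-p^{n+2e}-p^{2n}+p^n$ and the constant term. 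Your proposal never performs this, and the one number it predicts is already inconsistent with the target: since the degenerate stratum supplies $p^{2n+e}-p^{2n}$, the remaining strata must contribute $-p^{2n}+p^n$ plus the $\Sigma_1$-terms, not $-2p^{2n}+p^n$ as you write. Until that calculation is actually carried out, the formula in (ii) is not proved.
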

 
 \begin{proof}
Observe that the conditions $\alpha=-\beta$, $-\beta=\beta^{p^{n-k}}$, and $-c\alpha=c\beta$, $c\beta=(-c\beta)^{p^{n-k}}$ are equivalent, when $c=1$. 

First, if  $\frac{n}{e}$ is odd, and $p\equiv 1\pmod 4$,  $\alpha\neq -\beta\neq \beta^{p^{n-k}}$ and $L_{\alpha,\beta}$ is  a permutation (as well as, $L_{-\alpha,-\beta}$) (recall that $(\alpha,\beta)\in \overline{\sY_1}$), then (since $x_{\alpha,\beta}=x_{-\alpha,-\beta}$),
\allowdisplaybreaks
\begin{align*}
S_{\alpha,\beta}S_{-\alpha,-\beta}=&
 q\,\chi_1(\beta)\eta(-\alpha-\beta)\,\overline{\chi_1((\alpha+\beta)x_{\alpha,\beta}^{p^k+1})}\\
&\qquad \cdot \chi_1(-\beta)\eta(\alpha+\beta) \,\overline{\chi_1(-(\alpha+\beta)x_{\alpha,\beta}^{p^k+1})}= q (-1)^n,
\end{align*}
and so,
\[
T_{b}=(-1)^n p^{n}\sum_{(\alpha,\beta)\in\bar\sY_1} \chi_1(-b(\alpha+\beta)).
\]
Similarly, in the same case, if $p\equiv 3\pmod 4$, then  (there are two extra copies of $\imath^{3n}$ rendering a factor of $(-1)^n$)
\begin{align*}
S_{\alpha,\beta}S_{-\alpha,-\beta}&= (-1)^n  q\,\eta(-1)=q,
\end{align*}
therefore,
\[
T_{b}=p^{n}\sum_{(\alpha,\beta)\in\bar\sY_1} \chi_1(-b(\alpha+\beta)).
\]

Therefore, when $\frac{n}{e}$ is odd, we can uniquely write this as 
\begin{align*}
T_b&=(-1)^{n\frac{p+1}{2}} p^n \sum_{(\alpha,\beta)\in\bar\sY_1} \chi_1(-b(\alpha+\beta)).
\end{align*}

If  $0\neq \alpha=-\beta= \beta^{p^{n-k}}$ (there are $p^e-1$ such roots $\beta$), so, $\frac{n}{e}$ is even, then $S_{\alpha,\beta}=q\chi_1(\beta)$, $S_{-\alpha,-\beta}=q\chi_1(-\beta)$, and so,
\allowdisplaybreaks
\begin{align*}
T_{b,1}&=\sum_{0\neq \alpha=-\beta= \beta^{p^{n-k}}}\chi_1(-b(\alpha+\beta)) S_{\alpha,\beta}S_{-\alpha,-\beta}\\
& = p^{2n} \sum_{0\neq \alpha=-\beta= \beta^{p^{n-k}}} \chi_1(-b(\alpha+\beta))\chi_1(\beta)\chi_1(-\beta) =p^{2n}(p^e-1).
\end{align*}

Assume $\alpha\neq-\beta= \beta^{p^{n-k}}$, so $\frac{n}{e}$ is even. We observe that either both $\alpha+\beta,-(\alpha+\beta)$ satisfy  $X^{\frac{q-1}{p^e+1}}=(-1)^{\frac{m}{e}}$, or none will do. Furthermore,  
\begin{align*}
S_{\alpha,\beta}S_{-\alpha,-\beta}= 
\begin{cases} 
p^{n}  &\text{ if } A^{\frac{q-1}{p^e+1}}\neq (-1)^{\frac{m}{e}}\\
p^{n+2e}&\text{ if } A^{\frac{q-1}{p^e+1}}=(-1)^{\frac{m}{e}}.
\end{cases}
\end{align*}
Thus, if $A^{\frac{q-1}{p^e+1}}= (-1)^{\frac{m}{e}}$,  then 
\begin{align*}
T_{b,2}&=p^{n+2e} \sum_{\substack{\alpha,\beta\in\F_q^*\\ A^{\frac{q-1}{p^e+1}}= (-1)^{\frac{m}{e}}}} \chi_1(-b(\alpha+\beta))  \\
&=p^{n+2e}\sum_{A ,A^{\frac{q-1}{p^e+1}}=(-1)^{\frac{m}{e}}}\sum_{\beta\in\F_q^*}  \chi_1(-bA )\\
&=p^{n+2e}(p^n-1) \Sigma_1.
\end{align*}
If $A^{\frac{q-1}{p^e+1}}\neq  (-1)^{\frac{m}{e}}$, $A\neq 0$ (since $\alpha\neq -\beta$), then
\begin{align*}
T_{b,3}&=p^{n} \sum_{\substack{\alpha,\beta\in\F_q^*\\ A^{\frac{q-1}{p^e+1}}\neq (-1)^{\frac{m}{e}}}} \chi_1(-b(\alpha+\beta)) =p^n(p^n-1) \sum_{\substack{A\neq 0\\ A^{\frac{q-1}{p^e+1}}\neq (-1)^{\frac{m}{e}}}} \chi_1(-bA)\\
&=  p^n(p^n-1) \left(\sum_{A\in\F_q}\chi_1(-bA)-  \sum_{A,A^{\frac{q-1}{p^e+1}}= (-1)^{\frac{m}{e}}} \chi_1(-bA)-1\right)\\
& =- p^n(p^n-1)\left(1+ \Sigma_1\right).
\end{align*}

Now, let  $\alpha\neq-\beta\neq \beta^{p^{n-k}}$ and $\frac{n}{e}$ even. Then (we assume that $x_{\alpha,\beta}$ is a root of $L_{\alpha,\beta}(x)=-(\beta+\beta^{p^k})$, if it exists; observe also that $-x_{\alpha,\beta}$ is a root for $L_{-\alpha,-\beta}(x)=-((-\beta)+(-\beta)^{p^k})$, as well),
\begin{align*}
S_{\alpha,\beta}S_{-\alpha,-\beta}= 
\begin{cases} 
p^{n} 
&\text{ if   $L_{\alpha,\beta}$ is PP}\\
p^{n+2e}
&\text{ if   $L_{\alpha,\beta}$ is not PP}.
\end{cases}
\end{align*}
In this case, then,
\begin{align*}
T_{b,4}&=p^{n+2e} \sum_{(\alpha,\beta)\in\sA_1}   \chi_1\left(-b(\alpha+\beta)\right)
+p^n\sum_{(\alpha,\beta)\in\bar \sY_1} \chi_1\left(-b(\alpha+\beta)\right).
\end{align*}
Thus, when $\frac{n}{e}$ is even, 
\[
T_b=T_{b,1}+T_{b,2}+T_{b,3}+T_{b,4}.
\]
The theorem is shown.
\end{proof}
\begin{rem}
As an example, we took $p=n=3$, $k=2$, $b=2$, and easily found in less than a second, using SageMath on a Macbook Pro \text{\rm I}$7$ with $16$\text{\rm GB} of \text{\rm RAM}, the set of all pairs $(\alpha,\beta)$, such that $\alpha\neq -\beta\neq \beta^{p^{n-k}}$ and $L_{\alpha,\beta}$ is  a permutation, and computed the value  of $\displaystyle _c\cB_F(a,2a)=2$, via Theorem~$6 (i)$, for all $a$.
\end{rem}

\subsection{The case $c=-1$}
Below, we take  $x_{\alpha,\beta}$ to be the root of $L_{\alpha,\beta}(x)=-(\beta^{p^k}+\beta)$, which always exists if $(\alpha,\beta)\in \cA_1\cup\bar\sY_1$, and 
\[
  \Sigma_1=\sum_{A,A^{\frac{q-1}{p^e+1}}=(-1)^{\frac{m}{e}}} \chi_1(-bA),\quad   \Sigma_2= \sum_{\beta^{p^k-1}+1=0} \chi_1(2\beta).
\]
\begin{thm}
Let $F(x)=x^{p^k+1}$, $1\leq k<n$, be the Gold function on $\F_{p^n}$, $p$ and odd prime,  $n\geq 2$, and $c=-1$. In addition to $\Sigma_1$ from Theorem~\textup{\ref{thm:c=1}}, we let $\displaystyle \Sigma_2= \sum_{\beta^{p^k-1}+1=0} \chi_1(2\beta)$. The $(-1)$-Boomerang Connectivity Table entry of $F$ at $(a,ab$) is 
$\displaystyle _c\cB_F(a,ab)=\frac{2}{q } \left({_{-1}}\Delta_{F}(1,b)\right)+1 +\frac1{q^2}T_b$, where:
\begin{itemize}
\item[$(i)$] If $\frac{n}{e}$ is odd, then 
\begin{align*}
T_b&=(-1)^{n\frac{p-1}{2}} p^{n} \sum_{(\alpha,\beta)\in\bar \sY_1}  \chi_1\left(2\beta-A\left(b+2x_{\alpha,\beta}^{p^k+1}\right)\right).
\end{align*}
\item[$(ii)$]  If $\frac{n}{e}$ is even,
\allowdisplaybreaks
\begin{align*}
T_b&=p^n(p^n-1)\Sigma_1+p^n(p^{2e}-1)\Sigma_1\Sigma_2\\
&\quad +p^{n+2e} \sum_{(\alpha,\beta)\in\sA_1}    \chi_1\left(2\beta-(\alpha+\beta)\left(2x_{\alpha,\beta}^{p^k+1}+b\right)\right)\\
&\quad   +p^n\sum_{(\alpha,\beta)\in\bar \sY_1}   \chi_1\left(2\beta-(\alpha+\beta)\left(2x_{\alpha,\beta}^{p^k+1}+b\right)\right).
\end{align*}
\end{itemize}
\end{thm}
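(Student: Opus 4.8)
The plan is to specialize the master identity of Theorem~\ref{thm:cBU_Char} to $c=-1$ and then evaluate the resulting Weil sums through Coulter's formulas exactly as in the $c=1$ analysis of Theorem~\ref{thm:c=1}, the one essential new feature being $c=c^{-1}=-1$. First I would record that, since $c^{-1}=c=-1$, the two differential contributions coincide, so $\frac1q({_c}\Delta_F(1,b)+{_{c^{-1}}}\Delta_F(1,b))=\frac2q\,{_{-1}}\Delta_F(1,b)$, which accounts for the first summand. The decisive simplification is that $-\alpha c=\alpha$ and $-\beta c^{-1}=\beta$, whence $S_{-\alpha c,-\beta c^{-1}}=S_{\alpha,\beta}$ and therefore
\[
T_b=\sum_{\alpha,\beta\in\F_q,\ \alpha\beta\neq0}\chi_1\bigl(-b(\alpha+\beta)\bigr)\,S_{\alpha,\beta}^2 .
\]
Thus, in contrast with the $c=1$ case (where one multiplied $S_{\alpha,\beta}$ by its sign-flipped partner $S_{-\alpha,-\beta}$), here I only need the \emph{square} of a single sum. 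This is cleaner for the phase bookkeeping, since the same root $x_{\alpha,\beta}$ of $L_{\alpha,\beta}(x)=-(\beta^{p^k}+\beta)$ is used twice and $\overline{\chi_1((\alpha+\beta)x_{\alpha,\beta}^{p^k+1})}^2=\chi_1(-2(\alpha+\beta)x_{\alpha,\beta}^{p^k+1})$, but it forces the factors $\chi_1(\beta)^2=\chi_1(2\beta)$, which is precisely where the sum $\Sigma_2$ enters.

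Next I would reuse the four-regime partition of $\{(\alpha,\beta):\alpha\beta\neq0\}$ set up before Theorem~\ref{thm:c=1}: (a) $\alpha=-\beta=\beta^{p^{n-k}}$; (b) $\alpha=-\beta\neq\beta^{p^{n-k}}$; (c) $\alpha\neq-\beta=\beta^{p^{n-k}}$; (d) $\alpha\neq-\beta\neq\beta^{p^{n-k}}$, and substitute the already-tabulated values of $S_{\alpha,\beta}$ from Theorems~\ref{thm:Co98_1} and~\ref{thm:Co98}. Regime (b) gives $S_{\alpha,\beta}=0$. For $\frac ne$ odd, regimes (a) and (c) are vacuous, so only regime (d) with $L_{\alpha,\beta}$ a permutation survives; here I would square the formula of Theorem~\ref{thm:Co98}(1)(i). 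Two simplifications do the work: $\eta(-\alpha-\beta)^2=1$, and the prefactors $(-1)^{n-1}$ and $(-1)^{n-1}\imath^{3n}$ square to $1$ and $(-1)^n$ respectively, so the $p\equiv1$ and $p\equiv3\pmod4$ subcases merge into the single sign $(-1)^{n(p-1)/2}$. Collecting the surviving phase $\chi_1(2\beta-(\alpha+\beta)(b+2x_{\alpha,\beta}^{p^k+1}))$ yields part~$(i)$.

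For $\frac ne$ even I would evaluate $S_{\alpha,\beta}^2$ in regimes (a), (c), (d). Regime (d) reproduces, after squaring Theorem~\ref{thm:Co98}(1)(ii) and Theorem~\ref{thm:Co98}(2), the two explicit character sums over $\sA_1$ and $\bar\sY_1$ carrying the weights $p^{n+2e}$ and $p^n$. Regime (a), where $\alpha+\beta=0$ and $S_{\alpha,\beta}=q\chi_1(\beta)$, contributes $p^{2n}\Sigma_2$. The main obstacle is regime (c): for each of the $p^e-1$ admissible $\beta$ (those with $\beta^{p^k-1}+1=0$) one must sum over $A=\alpha+\beta\in\F_q^*$ the squared value from Theorem~\ref{thm:Co98_1}(2), splitting according to whether $A^{(q-1)/(p^e+1)}=(-1)^{m/e}$; using orthogonality $\sum_{A\in\F_q}\chi_1(-bA)=0$ (hence $\sum_{A\neq0}\chi_1(-bA)=-1$ for $b\neq0$) turns the inner $A$-sum into $p^{n+2e}\Sigma_1-p^n(1+\Sigma_1)$, and the outer $\beta$-sum supplies the factor $\Sigma_2$. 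Adding regimes (a) and (c) then collapses to the two leading terms $p^n(p^n-1)\Sigma_2+p^n(p^{2e}-1)\Sigma_1\Sigma_2$, which together with the regime-(d) sums gives part~$(ii)$ (note that my bookkeeping produces $\Sigma_2$ in the linear leading term, the one inherited from regime (a)). The delicate points I anticipate are keeping the norm-type condition on $A$ aligned between $\Sigma_1$ and the split of the $A$-sum, and using the identity $\beta^{p^{n-k}}+\beta=0\Leftrightarrow\beta^{p^k-1}+1=0$ (established before Theorem~\ref{thm:c=1}) so that $\Sigma_2$ is correctly indexed.
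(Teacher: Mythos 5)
Your proposal is correct and follows essentially the same route as the paper's proof: since $c=-1$ forces $S_{-\alpha c,-\beta c^{-1}}=S_{\alpha,\beta}$, the paper likewise reduces $T_b$ to $\sum_{\alpha\beta\neq 0}\chi_1(-b(\alpha+\beta))\,S_{\alpha,\beta}^2$, splits into the same four regimes, squares Coulter's evaluations (merging the $p\equiv 1$ and $p\equiv 3 \pmod 4$ subcases into the single sign $(-1)^{n(p-1)/2}$), and handles regime (c) by the same orthogonality splitting on the norm condition for $A$. Your flagged discrepancy is real and resolves in your favor: the paper's own proof also ends with $p^n(p^n-1)\Sigma_2+p^n(p^{2e}-1)\Sigma_1\Sigma_2$ (the linear term inherited from regime (a), where the phase is $\chi_1(2\beta)$ summed over $\beta^{p^k-1}+1=0$), so the $p^n(p^n-1)\Sigma_1$ appearing in the theorem statement is a typo for $p^n(p^n-1)\Sigma_2$.
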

\begin{proof}
 In this case, since $c=-1$, $S_{\alpha,\beta}=S_{-\alpha c,-\beta c^{-1}}$. 
 If $\frac{n}{e}$ is odd and $\alpha\neq -\beta\neq \beta^{p^{n-k}}$, then 
\begin{align*}
\left(S_{\alpha,\beta}\right)^2&=p^n \chi_1\left(2\beta-2(\alpha+\beta)x_{\alpha,\beta}^{p^k+1}\right) \text{ if  $L_{\alpha,\beta}$ is   PP,  $p\equiv 1\pmod 4$},\\
\left(S_{\alpha,\beta}\right)^2&=(-1)^n p^n \chi_1\left(2\beta-2(\alpha+\beta)x_{\alpha,\beta}^{p^k+1}\right) \text{ if  $L_{\alpha,\beta}$ is  PP, $p\equiv 3\pmod 4$}.
\end{align*}
We then get, if $L_{\alpha,\beta}$ is a PP, then
\begin{align*}
T_{b}&=p^{n} \sum_{(\alpha,\beta)\in\bar \sY_1}  \chi_1\left(2\beta-A\left(b+2x_{\alpha,\beta}^{p^k+1}\right)\right)\text{ if } p\equiv 1\pmod 4,\\
T_{b}&=(-1)^n p^{n} \sum_{(\alpha,\beta)\in\bar \sY_1}  \chi_1\left(2\beta-A\left(b+2x_{\alpha,\beta}^{p^k+1}\right)\right) \text{ if } p\equiv 3\pmod 4.
\end{align*}
Therefore, if $\frac{n}{e}$ is odd, then, we can write this uniquely as
\[
T_b=(-1)^{n\frac{p-1}{2}} p^{n} \sum_{(\alpha,\beta)\in\bar \sY_1}  \chi_1\left(2\beta-A\left(b+2x_{\alpha,\beta}^{p^k+1}\right)\right).
\]

When $\frac{n}{e}$ is even, we will write $T_b=T_{b,1}+T_{b,2}+T_{b,3}+T_{b,4}$, with the partial sum's expressions  defined below.
If   $\frac{n}{e}$ is even and $0\neq \alpha=-\beta= \beta^{p^{n-k}}$, then
\begin{align*}
T_{b,1}&= \sum_{0\neq \alpha=-\beta= \beta^{p^{n-k}}}\chi_1(-b(\alpha+\beta)) \left(S_{\alpha,\beta}\right)^2  \\ 
&=p^{2n} \sum_{\beta,\beta^{p^k-1}+1=0} \chi_1(2\beta)=p^{2n}  \Sigma_2.
\end{align*}
If $\frac{n}{e}$ is even ($n=2m$) and $0\neq \alpha\neq -\beta= \beta^{p^{n-k}}$, then
\begin{align*}
\left(S_{\alpha,\beta}\right)^2  = 
\begin{cases} 
p^{n+2e}\chi_1(2\beta) &\text{ if } A^{\frac{q-1}{p^e+1}}=(-1)^{\frac{m}{e}}\\
p^{n}\chi_1(2\beta)  &\text{ if } A^{\frac{q-1}{p^e+1}}\neq (-1)^{\frac{m}{e}}.
\end{cases}
\end{align*}
Thus, if $\frac{n}{e}$ is even, $0\neq \alpha\neq -\beta= \beta^{p^{n-k}}$ and $A^{\frac{q-1}{p^e+1}}= (-1)^{\frac{m}{e}}$,  then 
\begin{align*}
T_{b,2}&=p^{n+2e} \sum_{\substack{\alpha,\beta\in\F_q^*,\beta^{p^k-1}+1=0 \\ A^{\frac{q-1}{p^e+1}}= (-1)^{\frac{m}{e}}}} \chi_1(-b(\alpha+\beta)) \chi_1(2\beta)\\
&=p^{n+2e} \sum_{A,A^{\frac{q-1}{p^e+1}}=(-1)^{\frac{m}{e}}} \chi_1(-bA)  \sum_{\beta,\beta^{p^k-1}+1=0} \chi_1(2\beta)=p^{n+2e}\Sigma_1\Sigma_2.
\end{align*}
If $ \frac{n}{e}$ is even, $0\neq \alpha\neq -\beta= \beta^{p^{n-k}}$ and $A^{\frac{q-1}{p^e+1}}\neq  (-1)^{\frac{m}{e}}$  (note that $A\neq 0$), then
\begin{align*}
T_{b,3}&=p^{n} \sum_{\substack{\alpha,\beta\in\F_q^*,\beta^{p^k-1}+1=0 \\ A\neq 0,A^{\frac{q-1}{p^e+1}}\neq (-1)^{\frac{m}{e}}}}  \chi_1(2\beta-b(\alpha+\beta)) \\
&=p^n \sum_{\beta,\beta^{p^k-1}+1=0} \chi_1(2\beta) \sum_{\substack{A\neq 0\\ A^{\frac{q-1}{p^e+1}}\neq (-1)^{\frac{m}{e}}}} \chi_1(-bA)\\
&=  -p^n\Sigma_2  \left(\sum_{A,A^{\frac{q-1}{p^e+1}}=(-1)^{\frac{m}{e}}} \chi_1(-bA)+1 \right)\\
&=-p^n\Sigma_2\left(\Sigma_1+1 \right).
\end{align*}
Now, we let   $\frac{n}{e}$ be even and $\alpha\neq-\beta\neq \beta^{p^{n-k}}$. Then (we assume that $x_{\alpha,\beta}$ is a root of $L_{\alpha,\beta}(x)=-(\beta+\beta^{p^k})$, if it exists),
\begin{align*}
\left(S_{\alpha,\beta}\right)^2  = 
\begin{cases} 
p^{n+2e} \chi_1(2\beta) \overline{ \chi_1\left(2Ax_{\alpha,\beta}^{p^k+1}\right)} &\text{ if   $L_{\alpha,\beta}$ is not PP}\\
p^{n}\chi_1(2\beta) \overline{\chi_1\left(2Ax_{\alpha,\beta}^{p^k+1}\right)} &\text{ if   $L_{\alpha,\beta}$ is PP}.
\end{cases}
\end{align*}
In this case, then,
\begin{align*}
T_{b,4}&=p^{n+2e} \sum_{(\alpha,\beta)\in\sA_1}   \chi_1\left(2\beta-(\alpha+\beta)\left(2x_{\alpha,\beta}^{p^k+1}+b\right)\right)\\
&\qquad\qquad\qquad  +p^n\sum_{(\alpha,\beta)\in\bar \sY_1}    \chi_1\left(2\beta-(\alpha+\beta)\left(2x_{\alpha,\beta}^{p^k+1}+b\right)\right).
\end{align*}

Therefore, if $\frac{n}{e}$ is even, then
\allowdisplaybreaks
\begin{align*}
T_b&=T_{b,1}+T_{b,2}+T_{b,3}+T_{b,4}\\
&=p^{2n}\Sigma_1+p^{n+2e}\Sigma_1\Sigma_2-p^n\Sigma_2(\Sigma_1+1)\\
&\quad +p^{n+2e} \sum_{(\alpha,\beta)\in\sA_1}    \chi_1\left(2\beta-(\alpha+\beta)\left(2x_{\alpha,\beta}^{p^k+1}+b\right)\right)\\
&\quad   +p^n\sum_{(\alpha,\beta)\in\bar \sY_1}    \chi_1\left(2\beta-(\alpha+\beta)\left(2x_{\alpha,\beta}^{p^k+1}+b\right)\right)\\
&=p^n(p^n-1)\Sigma_2+p^n(p^{2e}-1)\Sigma_1\Sigma_2\\
&\quad +p^{n+2e} \sum_{(\alpha,\beta)\in\sA_1}    \chi_1\left(2\beta-(\alpha+\beta)\left(2x_{\alpha,\beta}^{p^k+1}+b\right)\right)\\
&\quad   +p^n\sum_{(\alpha,\beta)\in\bar \sY_1}   \chi_1\left(2\beta-(\alpha+\beta)\left(2x_{\alpha,\beta}^{p^k+1}+b\right)\right).
\end{align*}
Our theorem is shown.
\end{proof}

\subsection{The case $c^{p^k-1}=1$, $c$ not equal to $\pm 1$}

Since they were treated earlier, we assume here that $c \ne\pm 1$.
In this case, the conditions $-\beta=\beta^{p^{n-k}}$, and  $-c^{-1}\beta=(-c^{-1} \beta)^{p^{n-k}}$ are equivalent. From here on until the end of the subsection, we let $A=\alpha+\beta, A'=-c\alpha-c^{-1}\beta$. We will be using below that if  $-\beta=\beta^{p^{n-k}}$ (which is the same as $\beta^{p^k-1}=-1$, or, even further, $\beta^{p^e-1}=-1$); this can happen only if $\frac{n}{e}$ is even), then $\beta^{\frac{q-1}{p^e+1}}=(-1)^{\frac{m}{e}}$. This follows from the following computation (let $n=2m=2et$, so $t=\frac{m}{e}$):
\begin{align*}
\beta^{\frac{q-1}{p^e+1}}&=\beta^{(p^e-1)\left(p^{2e(t-1)}+\cdots+p^{2e}+1\right)}=(-1)^{t}=(-1)^{\frac{m}{e}}.
\end{align*}

 We define $\displaystyle \Sigma_3=\sum_{0\neq -\beta=\beta^{p^{n-k}}}  \chi_1\left(\beta(1-c^{-1})\right)$ and use the   notation  
\allowdisplaybreaks
\[
\Sigma(L)=\sum_{(\alpha,\beta)\in L} \chi_1\left(\beta-(\alpha+\beta) x_{\alpha,\beta}^{p^k+1}\right)   
\chi_1\left(-\beta c^{-1}+(\alpha c+\beta c^{-1}) x_{-\alpha c,-\beta c^{-1}}^{p^k+1}\right).
\]
Further, $\cC,\cC'$ are the sets of $(\alpha,\beta)$ satisfying the conditions
$A^{\frac{q-1}{p^e+1}}=(-1)^{\frac{m}{e}}$, respectively, ${A'}^{\frac{q-1}{p^e+1}}=(-1)^{\frac{m}{e}}$.

\begin{thm}
Let $F(x)=x^{p^k+1}$, $1\leq k<n$, be the Gold function on $\F_{p^n}$, $p$ and odd prime,  $n\geq 2$, and $c^{p^k-1}=1$, $c\neq \pm 1$. The $c$-Boomerang Connectivity Table entry of $F$ at $(a,ab$) is 
$\displaystyle _c\cB_F(a,ab)=\frac{1}{q}\left({_c}\Delta_{F}(1,b)+{_{c^{-1}}}\Delta_{F}(1,b)\right)+1 +\frac1{q^2}T_b$, where:
\begin{itemize}
\item[$(i)$] If $\frac{n}{e}$ is odd, then 
{\small
\begin{align*}
T_b&=p^n (-1)^{\frac{n(p-1)}{2}} \sum_{(\alpha,\beta)\in\bar \sY_1\cap\bar\sY_2} \chi_1(-b A+\beta(1-c^{-1})) \\
&\qquad\qquad\qquad\qquad\qquad \cdot \eta(AA')   \overline{\chi_1\left(Ax_{\alpha,\beta}^{p^k+1} +A'x_{-\alpha c,-\beta c^{-1}}^{p^k+1}\right)}.
\end{align*}
}
\item[$(ii)$]  If $\frac{n}{e}$ is even $(n=2m)$,
\allowdisplaybreaks
{\small
\begin{align*}
&T_b=p^{n+m+e}(-1)^{\frac{m}{e}+1}  \Sigma_3+ p^{n+2e} \sum_{\substack{\alpha,\beta\in\cC\cap \cC'\\ \alpha\neq -\beta=\beta^{p^{n-k}}\\ 0\neq \alpha \neq -\beta c^{-2}\neq 0 }}   \chi_1(-b(\alpha+\beta)) \chi_1(\beta(1-c^{-1}))\\
&-p^{n+e}\sum_{\substack{\alpha,\beta\in\cC\triangle \cC'\\ \alpha\neq -\beta=\beta^{p^{n-k}}\\ 0\neq \alpha \neq -\beta c^{-2}\neq 0 }}  \chi_1(-b(\alpha+\beta)) \chi_1(\beta(1-c^{-1}))\\
& +p^n\sum_{\substack{\alpha,\beta\in\bar \cC\cap \bar\cC'\\ \alpha\neq -\beta=\beta^{p^{n-k}}\\ 0\neq \alpha \neq -\beta c^{-2}\neq 0 }}   \chi_1(-b(\alpha+\beta)) \chi_1(\beta(1-c^{-1}))
\\
&  +p^{n+m+e}(-1)^{\frac{m}{e}+1}  \sum_{\substack{\beta^{p^k-1}=-1\\ 
\alpha=-\beta c^{-2} }} \chi_1\left(\beta(1-c^{-1})(1-b(1+c^{-1})\right)\\
&+p^{n+2e}  \Sigma\left(\sA_1'\cap \sA_2'\right)-p^{n+e}\Sigma\left((\sA_1'\cap \tilde\sY_2)\cup (\sA_2'\cap\tilde \sY_1)\right)+p^n\Sigma\left(\tilde\sY_1\cap \tilde \sA_2\right).
\end{align*}
}
\end{itemize}
\end{thm}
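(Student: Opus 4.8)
The plan is to evaluate $T_b$ directly from \eqref{eq:eq32}, exactly as in the proofs of Theorem~\ref{thm:c=1} and the $c=-1$ case, by inserting the closed forms of $S_{\alpha,\beta}$ and $S_{-\alpha c,-\beta c^{-1}}$ derived earlier in this section and then sorting the resulting products by the signed power of $p$ they produce. Writing $A=\alpha+\beta$ and $A'=-c\alpha-c^{-1}\beta$, recall $S_{\alpha,\beta}=\chi_1(\beta)\mathscr{S}_k(A,B)$ with $B=\beta^{p^{n-k}}+\beta$, and $S_{-\alpha c,-\beta c^{-1}}=\chi_1(-\beta c^{-1})\mathscr{S}_k(A',B')$ with $B'=(-\beta c^{-1})^{p^{n-k}}-\beta c^{-1}$. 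The first thing I would record is that $c^{p^k-1}=1$ forces $c\in\F_{p^e}^*$ and hence $c^{p^{n-k}}=c$, which gives the clean identity $B'=-c^{-1}B$; thus the two inner Weil sums degenerate (i.e.\ $B=0$, equivalently $\beta^{p^k-1}=-1$) simultaneously, and Theorem~\ref{thm:Co98_1} applies to both factors on this locus while Theorem~\ref{thm:Co98} applies to both off it. I would then isolate the two ``diagonal'' degenerations $A=0$ (i.e.\ $\alpha=-\beta$) and $A'=0$ (i.e.\ $\alpha=-c^{-2}\beta$), noting that they are disjoint for $\beta\neq0$ precisely because $c\neq\pm1$ makes $c^{-2}\neq1$.

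For $\frac{n}{e}$ odd the locus $\beta^{p^k-1}=-1$ is empty, so on a diagonal line one factor vanishes ($S=0$ when $A=0$ with $B\neq0$, and likewise for $A'$), leaving only pairs with $A,A'\neq0$. On those, a non-permutation $L$ gives $\mathscr{S}_k=0$ since Theorem~\ref{thm:Co98}(2) cannot occur for $\frac{n}{e}$ odd, so only $(\alpha,\beta)\in\bar\sY_1\cap\bar\sY_2$ survive. Substituting Theorem~\ref{thm:Co98}(1)(i) into both factors, the two copies of $(-1)^{n-1}$ cancel, $\eta(-A)\eta(-A')=\eta(AA')$, and for $p\equiv3\pmod4$ the two copies of $\imath^{3n}$ give $\imath^{6n}=(-1)^n$; packaging the parity as $(-1)^{n(p-1)/2}$ and combining the exponentials $\chi_1(\beta)\chi_1(-\beta c^{-1})=\chi_1(\beta(1-c^{-1}))$ with $\chi_1(-bA)$ yields part~(i) verbatim.

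The bulk of the work is $\frac{n}{e}$ even ($n=2m$), organized as $T_b=\sum_i T_{b,i}$. On the line $A=0$ with $\beta^{p^k-1}=-1$ one has $S_{\alpha,\beta}=q\chi_1(\beta)$ and $A'=\beta(c-c^{-1})\neq0$; the crucial computation is that $\beta^{(q-1)/(p^e+1)}=(-1)^{m/e}$ on this locus, while $(c-c^{-1})^{(q-1)/(p^e+1)}=1$ because $c-c^{-1}\in\F_{p^e}^*$, so $(A')^{(q-1)/(p^e+1)}=(-1)^{m/e}$ and Theorem~\ref{thm:Co98_1} lands in its larger branch, $S_{-\alpha c,-\beta c^{-1}}=(-1)^{m/e+1}p^{m+e}\chi_1(-\beta c^{-1})$; this gives the $p^{n+m+e}(-1)^{m/e+1}\Sigma_3$ term. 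The mirror computation on $A'=0$ (now $A=\beta(1-c^{-2})$, again of norm $(-1)^{m/e}$) gives the companion term with argument $\beta(1-c^{-1})(1-b(1+c^{-1}))$ after factoring $1-c^{-2}=(1-c^{-1})(1+c^{-1})$. On the rest of $\beta^{p^k-1}=-1$ (both $A,A'\neq0$) both factors come from Theorem~\ref{thm:Co98_1}, and multiplying the two branches $\{(-1)^{m/e}p^m,(-1)^{m/e+1}p^{m+e}\}$ attaches coefficient $p^{n+2e}$ to $\cC\cap\cC'$, $-p^{n+e}$ to $\cC\triangle\cC'$, and $p^n$ to $\bar\cC\cap\bar\cC'$, the signs following from $(-1)^{2m/e}=1$ versus $(-1)^{2m/e+1}=-1$. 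Finally, off the locus $\beta^{p^k-1}=-1$ only $A,A'\neq0$ survive and both factors come from Theorem~\ref{thm:Co98}; the identical two-branch product produces the $\Sigma(\cdot)$ sums with coefficients $p^{n+2e}$, $-p^{n+e}$, $p^n$ over $\sA_1'\cap\sA_2'$, $(\sA_1'\cap\tilde\sY_2)\cup(\sA_2'\cap\tilde\sY_1)$, and $\tilde\sY_1\cap\tilde\sA_2$.

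I expect the main obstacle to be the bookkeeping rather than any single hard estimate: one must verify that the parameter region decomposes as an honest disjoint union of the two diagonal lines, the locus $\beta^{p^k-1}=-1$, and its complement (with the lines kept apart by $c\neq\pm1$), and that on each piece the product of the two Weil-sum branches is matched to the correct index set and the correct signed power of $p$. The one genuinely delicate point, which I would isolate as a short lemma, is the norm identity $(A')^{(q-1)/(p^e+1)}=(-1)^{m/e}$ on the line $A=0$ (and its mirror), since it is exactly what collapses the boundary contributions into the clean single-sign $\Sigma_3$-type terms instead of a mixed sum.
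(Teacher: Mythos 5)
Your proposal is correct and follows essentially the same route as the paper's proof: the same key observation that $c^{p^k-1}=1$ makes the two degenerate loci coincide (your $B'=-c^{-1}B$), the same decomposition into the two diagonal lines $A=0$, $A'=0$ (kept disjoint by $c\neq\pm1$), the locus $\beta^{p^k-1}=-1$, and its complement, and the same matching of Coulter's branches to the signed coefficients $p^{n+2e}$, $-p^{n+e}$, $p^n$ over the index sets $\cC,\cC',\sA_i',\tilde\sY_i$. The only cosmetic difference is your justification of the norm identity $(c-c^{-1})^{\frac{q-1}{p^e+1}}=1$ directly from $c-c^{-1}\in\F_{p^e}^*$ and $(p^e-1)\mid\frac{q-1}{p^e+1}$, whereas the paper expands the exponent as a geometric sum; these are the same computation.
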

\begin{proof}
We first assume that  $\frac{n}{e}$ is odd, $n=2m$, $e=\gcd(n,k)$, and $\alpha\neq -\beta\neq \beta^{p^{n-k}}$.  As before,
the only relevant case is   $-\alpha c\neq  \beta c^{-1}\neq (-\beta c^{-1})^{p^{n-k}}$ and both $L_{\alpha,\beta},L_{-\alpha c,-\beta c^{-1}}$ are permutations, surely on $\bar \sY_1\cap\bar\sY_2$. Then, $S_{\alpha,\beta}S_{-\alpha c,-\beta c^{-1}}$ equals
{\small
\begin{align*}
&p^n \eta(AA') \chi_1(\beta(1-c^{-1})) \overline{\chi_1\left(Ax_{\alpha,\beta}^{p^k+1} +A'x_{-\alpha c,-\beta c^{-1}}^{p^k+1}\right)},\text{ if } p\equiv 1\pmod 4\\
&p^n (-1)^n \eta(AA') \chi_1(\beta(1-c^{-1})) \overline{\chi_1\left(Ax_{\alpha,\beta}^{p^k+1} +A'x_{-\alpha c,-\beta c^{-1}}^{p^k+1}\right)},\text{ if } p\equiv 3\pmod 4,
\end{align*}
}
and so, $T_b$ equals
{\small
\[
p^n (-1)^{\frac{n(p-1)}{2}} \sum_{(\alpha,\beta)\in\bar \sY_1\cap\bar\sY_2} \chi_1(-b A+\beta(1-c^{-1}))
\eta(AA') \overline{\chi_1\left(Ax_{\alpha,\beta}^{p^k+1} +A'x_{-\alpha c,-\beta c^{-1}}^{p^k+1}\right)}.
\]
}

We continue now with $\frac{n}{e}$ being even.
If $\alpha=-\beta=\beta^{p^{n-k}}$, then surely $-\alpha c\neq \beta c^{-1}=(-\beta c^{-1} )^{p^{n-k}}$.   Observe that if $\alpha=-\beta=\beta^{p^{n-k}}$, then ${A'}^{\frac{q-1}{p^e+1}}=(-1)^{\frac{m}{e}}(c-c^{-1})^{\frac{q-1}{p^e+1}} =(-1)^{\frac{m}{e}}(c^2-1)^{\frac{q-1}{p^e+1}}  $. 
Using the fact that $c^{p^e}=c$, then (with $m=te$)
\begin{align*}
(c^2-1)^{\frac{q-1}{p^e+1}}&=(c^2-1)^{(p^e-1)(p^{2e(t-1)}+\cdots+p^{2e}+1)}\\
&= \left((c^2-1)^{p^{2e(t-1)}+\cdots+p^{2e}+1}\right)^{p^e-1}= \left(\prod_{i=1}^t (c^2-1)^{p^{2e(t-i)}}\right)^{p^e-1}\\
&= \left(\prod_{i=1}^t (c^{2p^{2e(t-i)}}-1)\right)^{p^e-1}= \left(\prod_{i=1}^t (c^{2}-1)\right)^{p^e-1}\\
&=(c^2-1)^{t(p^e-1)}=\left(\frac{(c^2-1)^{p^e}}{c^2-1} \right)^t
=\left(\frac{\left(c^{p^e}\right)^2-1}{c^2-1} \right)^t=1,
\end{align*}
so ${A'}^{\frac{q-1}{p^e+1}}=(-1)^{\frac{m}{e}}$ holds automatically.

Recall that  $\displaystyle \Sigma_3=\sum_{0\neq -\beta=\beta^{p^{n-k}}}  \chi_1\left(\beta(1-c^{-1})\right)$. When  $\frac{n}{e}$ is even, we will write $T_b=T_{b,1}+T_{b,2}+T_{b,3}$, with the partial sum's expressions  defined below.
Thus, if $\frac{n}{e}$ is even and $\alpha=-\beta=\beta^{p^{n-k}}$ (thus, $-\alpha c\neq \beta c^{-1}=(-\beta c^{-1} )^{p^{n-k}}$), then
\[
S_{\alpha,\beta}S_{-\alpha c,-\beta c^{-1}} 
=
p^{n+m+e}(-1)^{\frac{m}{e}+1} \chi_1\left(\beta(1-c^{-1})\right),
\] 
and so,  
\allowdisplaybreaks
\begin{align*}
T_{b,1}&=p^{n+m+e}(-1)^{\frac{m}{e}+1} \sum_{\substack{0\neq \alpha=-\beta\\
-\beta=\beta^{p^{n-k}}}} \chi_1(-b(\alpha+\beta))  \chi_1\left(\beta(1-c^{-1})\right)  \\
&= p^{n+m+1}(-1)^{\frac{m}{e}+1} \sum_{0\neq -\beta=\beta^{p^{n-k}}}  \chi_1\left(\beta(1-c^{-1})\right) =p^{n+m+e}(-1)^{\frac{m}{e}+1}  \Sigma_3.
\end{align*}

If $\frac{n}{e}$ is even and $\alpha\neq -\beta=\beta^{p^{n-k}}$, there are two subcases: $-\alpha c\neq \beta c^{-1}=(-\beta c^{-1})^{p^{n-k}}$, and $-\alpha c= \beta c^{-1}=(-\beta c^{-1})^{p^{n-k}}$ (this can only happen for $\alpha=-\beta c^{-2}$). Here, for easy writing, we let $\cC,\cC'$ be the set of $(\alpha,\beta)$ satisfying the conditions
$A^{\frac{q-1}{p^e+1}}=(-1)^{\frac{m}{e}}$, respectively, ${A'}^{\frac{q-1}{p^e+1}}=(-1)^{\frac{m}{e}}$.
We write $\cC\triangle\cC'= (\cC\setminus\cC')\cup (\cC'\setminus\cC)$ for the symmetric difference.

In the first subcase ($\alpha\neq -\beta c^{-2}$), we get
\[
S_{\alpha,\beta}S_{-\alpha c,-\beta c^{-1}} = 
\begin{cases}
p^{n+2e}  \chi_1(\beta(1-c^{-1})) &\text{ on } \cC\cap \cC'\\
-p^{n+e}  \chi_1(\beta(1-c^{-1})) &\text{ on } \cC\triangle\cC'\\
p^{n}  \chi_1(\beta(1-c^{-1})) &\text{ on } \bar\cC\cap\bar\cC'.
\end{cases}
\]
We now look at the second subcase ($\alpha=-\beta c^{-2}$). Note that in this case,   $A^{\frac{q-1}{p^e+1}}=(-1)^{\frac{m}{e}}$ reduces to $(c^2-1)^{\frac{q-1}{p^e+1}}=1$, and from our previous computation, $A^{\frac{q-1}{p^e+1}}=(-1)^{\frac{m}{e}}$ holds automatically.

Thus,
\allowdisplaybreaks
\begin{align*}
S_{\alpha,\beta}S_{-\alpha c,-\beta c^{-1}} &=p^{n}\chi_1\left(-\beta c^{-1}\right)
(-1)^{\frac{m}{e}+1} p^{m+e} \chi_1(\beta) \\
&= 
(-1)^{\frac{m}{e}+1} p^{n+m+e}\chi_1(\beta(1-c^{-1})),
\end{align*}
and so,
\allowdisplaybreaks
\begin{align*}
T_{b,2}&=p^{n+2e} \sum_{\substack{\alpha,\beta\in\cC\cap \cC'\\ \alpha\neq -\beta=\beta^{p^{n-k}}\\ 0\neq \alpha \neq -\beta c^{-2}\neq 0 }}   \chi_1(-b(\alpha+\beta)) \chi_1(\beta(1-c^{-1}))\\
&\quad -p^{n+e}\sum_{\substack{\alpha,\beta\in\cC\triangle \cC'\\ \alpha\neq -\beta=\beta^{p^{n-k}}\\ 0\neq \alpha \neq -\beta c^{-2}\neq 0 }}  \chi_1(-b(\alpha+\beta)) \chi_1(\beta(1-c^{-1}))\\
&\quad+p^n\sum_{\substack{\alpha,\beta\in\bar \cC\cap \bar\cC'\\ \alpha\neq -\beta=\beta^{p^{n-k}}\\ 0\neq \alpha \neq -\beta c^{-2}\neq 0 }}   \chi_1(-b(\alpha+\beta)) \chi_1(\beta(1-c^{-1}))
\\
&\quad +p^{n+m+e}(-1)^{\frac{m}{e}+1}  \sum_{\substack{\beta^{p^k-1}=-1\\ 
\alpha=-\beta c^{-2} }} \chi_1(\beta(1-c^{-1}))\chi_1(-b(\beta-\beta c^{-2})).
\end{align*}

Next, when $\alpha\neq -\beta\neq \beta^{p^{n-k}}$, the case $-\alpha c=\beta c^{-1}\neq (-\beta c^{-1})^{p^{n-k}}$ renders $S_{-\alpha c,-\beta c^{-1}} =0$. Thus, it is sufficient to assume next, when $\frac{n}{e}$ is even, that $\alpha\neq -\beta\neq \beta^{p^{n-k}}$ and $-\alpha c\neq \beta c^{-1}\neq (-\beta c^{-1})^{p^{n-k}}$.
We first investigate the condition from Equation~\eqref{eq:pp_eq} when $L_{\alpha,\beta}, L_{-\alpha c,-\beta c^{-1}}$ are PP, under $c^{p^k-1}=1$, that is, $c^{p^e-1}=1$.
We compute ($n=dt=2et$, since $\frac{n}{e}$ is even), using $\frac{p^n-1}{p^d-1}=p^{2e(t-1)}+\cdots+p^{2e}+1$,
\begin{align*}
&\left(\frac{(\beta c^{-1})^{p^{n-k}} +\beta c^{-1}}{\alpha c+\beta c^{-1}}  \right)^{\frac{p^n-1}{p^d-1}}
=\left(\frac{ c^{-p^{n-k}} \beta^{p^{n-k}} +\beta c^{-1}}{\alpha c+\beta c^{-1}}  \right)^{\frac{p^n-1}{p^d-1}}\\
&=\left(\frac{ c^{1-p^{n-k}} \beta^{p^{n-k}} +\beta }{\alpha c^2+\beta }  \right)^{\frac{p^n-1}{p^d-1}}
= \left(\frac{ \beta^{p^{n-k}} +\beta }{\alpha c^2+\beta }  \right)^{\frac{p^n-1}{p^d-1}},\text{ since } p^e-1\,|\,p^{n-k}-1.
\end{align*}
Summarizing, $L_{\alpha,\beta},L_{-\alpha c,-\beta c^{-1}}$ are not PP if and only if  $\left(\frac{ \beta^{p^{n-k}} +\beta }{\alpha +\beta }  \right)^{\frac{p^n-1}{p^d-1}}=(-1)^{\frac{n}{d}}$, respectively,  $\left(\frac{ \beta^{p^{n-k}} +\beta }{\alpha c^2+\beta }  \right)^{\frac{p^n-1}{p^d-1}}=(-1)^{\frac{n}{d}}$. 

 We are now ready to find the relevant products. Given the prior definition of the sets $\sA_i,\bar \sY_i$, $i=1,2$, we modify them to impose also $\alpha\neq \beta c^{-2}$, and write them as $ \sA_i', \tilde \sY_i$, $i=1,2$.
 First, 
\[
S_{\alpha,\beta}S_{-\alpha c,-\beta c^{-1}}=p^{n+2e} \chi_1\left(\beta-(\alpha+\beta) x_{\alpha,\beta}^{p^k+1}\right) \chi_1\left(-\beta c^{-1}+(\alpha c+\beta c^{-1}) x_{-\alpha c,-\beta c^{-1}}^{p^k+1}\right),
\]
if neither $L_{\alpha,\beta}, L_{-\alpha c,-\beta c^{-1}}$ is  PP (thus, $(\alpha,\beta)\in\sA_1'\cap \sA_2')$. Secondly,
\[
S_{\alpha,\beta}S_{-\alpha c,-\beta c^{-1}}=-p^{n+e} \chi_1\left(\beta-(\alpha+\beta) x_{\alpha,\beta}^{p^k+1}\right) \chi_1\left(-\beta c^{-1}+(\alpha c+\beta c^{-1}) x_{-\alpha c,-\beta c^{-1}}^{p^k+1}\right),
\]
if exactly one of $L_{\alpha,\beta}, L_{-\alpha c,-\beta c^{-1}}$ is   PP (thus, $(\alpha,\beta)\in(\sA_1'\cap \tilde\sY_2)\cup (\sA_2'\cap\tilde \sY_1)$). Lastly, 
\[
S_{\alpha,\beta}S_{-\alpha c,-\beta c^{-1}}=p^{n} \chi_1\left(\beta-(\alpha+\beta) x_{\alpha,\beta}^{p^k+1}\right) \chi_1\left(-\beta c^{-1}+(\alpha c+\beta c^{-1}) x_{-\alpha c,-\beta c^{-1}}^{p^k+1}\right),
\]
if both $L_{\alpha,\beta}, L_{-\alpha c,-\beta c^{-1}}$ are   PP (thus, $(\alpha,\beta)\in\tilde\sY_1\cap \tilde \sA_2$).

With the notation  
\allowdisplaybreaks
\begin{align*}
\Sigma(L)&=\sum_{(\alpha,\beta)\in L} \chi_1\left(\beta-(\alpha+\beta) x_{\alpha,\beta}^{p^k+1}\right)  
\chi_1\left(-\beta c^{-1}+(\alpha c+\beta c^{-1}) x_{-\alpha c,-\beta c^{-1}}^{p^k+1}\right),
\end{align*}
we obtain
\allowdisplaybreaks
\begin{align*}
T_{b,3}&=p^{n+2e}  \Sigma\left(\sA_1'\cap \sA_2'\right)-p^{n+e}\Sigma\left((\sA_1'\cap \tilde\sY_2)\cup (\sA_2'\cap\tilde \sY_1)\right)+p^n\Sigma\left(\tilde\sY_1\cap \tilde \sA_2\right).
\end{align*}
Therefore, when $\frac{n}{e}$ is even, then 
\[
T_b=T_{b,1}+T_{b,2}+T_{b,3},
\]
and the theorem is shown.
\end{proof}

\subsection{The general case} 
 
  We can surely find an expression for the $c$-BCT for $c^{p^k-1}\neq 1$, but it is going to be slightly complicated to write, although, as we mentioned, computing the boomerang uniformity is a difficult endeavor.
  
  As in the previous results, for $c\in\F_{p^n}$, $c^{p^k-1}\neq 0$, the $c$-Boomerang Connectivity Table entry of $F(x)=x^{p^k+1}$ at $(a,ab$) is 
\[
\displaystyle _c\cB_F(a,ab)= \frac{1}{q}\left({_c}\Delta_{F}(1,b)+{_{c^{-1}}}\Delta_{F}(1,b)\right)+1 +\frac1{q^2}T_b,
\]
 where 
$\displaystyle T_b=\sum_{\alpha,\beta\in\F_q,\alpha\beta\neq 0} \chi_1(-b(\alpha+\beta))\, S_{\alpha,\beta}\, S_{-\alpha c,-\beta c^{-1}}$
  
When $\frac{n}{e}$ is odd, then $T_b$ is the same as for the case of $c^{p^k-1}=1$, $c\neq \pm 1$, namely (recall that $A=\alpha+\beta, A'=-c\alpha-c^{-1}\beta$), $T_b$ equals
{\small
\[
p^n (-1)^{\frac{n(p-1)}{2}} \sum_{(\alpha,\beta)\in\bar \sY_1\cap\bar\sY_2} \chi_1(-b A+\beta(1-c^{-1}))
\eta(AA') \overline{\chi_1\left(Ax_{\alpha,\beta}^{p^k+1} +A'x_{-\alpha c,-\beta c^{-1}}^{p^k+1}\right)}.
\]
}

 For even $\frac{n}{e}$, we will not write the $T_b$ expressions, rather we will find just the products $S_{\alpha,\beta} S_{-\alpha c,-\beta c^{-1}}$, which obviously determine the $T_b$ expressions. 
When $\frac{n}{e}$ is even, and $\alpha=-\beta=\beta^{p^{n-k}}$, then $S_{\alpha,\beta}=p^n\chi_1(\beta)$, and either $-\alpha c=\beta c^{-1}=(-\beta c^{-1})^{p^{n-k}}$, in which case 
\[
S_{\alpha,\beta}S_{-\alpha c,-\beta c^{-1}}=p^{2n}\chi_1\left(\beta(1- c^{-1})\right),
\]
 or, $-\alpha c\neq \beta c^{-1}\neq (-\beta c^{-1})^{p^{n-k}}$, in which case, via Theorem~\ref{thm:Co98}, $S_{\alpha,\beta}S_{-\alpha c,-\beta c^{-1}}$ equals
{\small
\[
\begin{cases}
p^{n+m+e} (-1)^{\frac{m}{e}+1}\chi_1\left(\beta(1- c^{-1})+(\alpha c+\beta c^{-1})x_{-\alpha c, -\beta c^{-1}}^{p^{k+1}}\right), & (\alpha,\beta)\in\sA_2\\
p^{n+m}(-1)^{\frac{m}{e}} \chi_1\left(\beta(1- c^{-1})+(\alpha c+\beta c^{-1})x_{-\alpha c, -\beta c^{-1}}^{p^{k+1}}\right), & (\alpha,\beta)\in\bar \sY_2.
\end{cases}
\]
}
Similarly, when $\frac{n}{e}$ is even, and $-\alpha c= \beta c^{-1}= (-\beta c^{-1})^{p^{n-k}}$,  $\alpha\neq -\beta\neq \beta^{p^{n-k}}$, and   $S_{\alpha,\beta} S_{-\alpha c,-\beta c^{-1}}$ equals
{\small
\[
\begin{cases}
p^{n+m+e} (-1)^{\frac{m}{e}+1} \chi_1\left(\beta(1- c^{-1})-(\alpha +\beta )x_{\alpha , \beta}^{p^{k+1}}\right), & (\alpha,\beta)\in\sA_1\\
p^{n+m}(-1)^{\frac{m}{e}} \chi_1\left(\beta(1- c^{-1})-(\alpha +\beta )x_{\alpha , \beta}^{p^{k+1}}\right), & (\alpha,\beta)\in\bar \sY_1.
\end{cases}
\]
}
If $\frac{n}{e}$ is even,  $\alpha\neq -\beta=\beta^{p^{n-k}}$ and  $-\alpha c\neq \beta c^{-1}\neq (-\beta c^{-1})^{p^{n-k}}$ (we let here and below $\cC_1,\cC_2$ be the sets of $(\alpha,\beta)$ such that $A^{\frac{q-1}{p^e+1}}=(-1)^{\frac{m}{e}}$, respectively, ${A'}^{\frac{q-1}{p^e+1}}=(-1)^{\frac{m}{e}}$), then $S_{\alpha,\beta} S_{-\alpha c,-\beta c^{-1}}$  equals
{\small
\[
\begin{cases}
-p^{n+e}   \chi_1\left(\beta(1- c^{-1})+(\alpha c+\beta c^{-1})x_{-\alpha c, -\beta c^{-1}}^{p^{k+1}}\right), & (\alpha,\beta)\in(\bar\cC_1\cap \sA_2)\cup(\cC_1\cap\bar \sY_2)\\
p^{n}   \chi_1\left(\beta(1- c^{-1})+(\alpha c+\beta c^{-1})x_{-\alpha c, -\beta c^{-1}}^{p^{k+1}}\right), & (\alpha,\beta)\in\bar\cC_1\cap \bar\sY_2\\
p^{n+2e}   \chi_1\left(\beta(1- c^{-1})+(\alpha c+\beta c^{-1})x_{-\alpha c, -\beta c^{-1}}^{p^{k+1}}\right), & (\alpha,\beta)\in\cC_1\cap \sA_2.
\end{cases} 
\]
}
Similarly, when $\frac{n}{e}$ is even, $\alpha\neq -\beta\neq \beta^{p^{n-k}}$ and  $-\alpha c\neq \beta c^{-1}= (-\beta c^{-1})^{p^{n-k}}$, then $S_{\alpha,\beta} S_{-\alpha c,-\beta c^{-1}}$  equals
{\small
\[
\begin{cases}
-p^{n+e}   \chi_1\left(\beta(1- c^{-1})-(\alpha +\beta)x_{\alpha, \beta}^{p^{k+1}}\right), & (\alpha,\beta)\in(\bar\cC_2\cap \sA_1)\cup(\cC_2\cap\bar \sY_1)\\
p^{n}   \chi_1\left(\beta(1- c^{-1})-(\alpha +\beta)x_{\alpha, \beta}^{p^{k+1}}\right), & (\alpha,\beta)\in\bar\cC_2\cap \bar\sY_1\\
p^{n+2e}  \chi_1\left(\beta(1- c^{-1})-(\alpha +\beta)x_{\alpha, \beta}^{p^{k+1}}\right), & (\alpha,\beta)\in\cC_2\cap \sA_1.
\end{cases} 
\]
}
Finally, if $\frac{n}{e}$ is even, $\alpha\neq -\beta\neq \beta^{p^{n-k}}$ and  $-\alpha c\neq \beta c^{-1}\neq  (-\beta c^{-1})^{p^{n-k}}$, then $S_{\alpha,\beta} S_{-\alpha c,-\beta c^{-1}}$  equals
 {\footnotesize
\[
\begin{cases}
-p^{n+e}   \chi_1\left(\beta(1- c^{-1})-Ax_{\alpha, \beta}^{p^{k+1}}+A' x_{-\alpha c, -\beta c^{-1}}^{p^{k+1}}\right), & (\alpha,\beta)\in (\sA_1\cap\bar \sY_2)\cup(\bar \sY_1\cap \sA_2)\\
p^{n+2e}  \chi_1\left(\beta(1- c^{-1})-Ax_{\alpha, \beta}^{p^{k+1}}+A' x_{-\alpha c, -\beta c^{-1}}^{p^{k+1}}\right), & (\alpha,\beta)\in\sA_1\cap \sA_2\\
p^{n} \chi_1\left(\beta(1- c^{-1})-Ax_{\alpha, \beta}^{p^{k+1}}+A' x_{-\alpha c, -\beta c^{-1}}^{p^{k+1}}\right), & (\alpha,\beta)\in\bar\sY_1\cap\bar \sY_2.
\end{cases} 
\]
} 
 
 \section{Concluding Remarks}
 \label{sec5}

It would be interesting to see what the entries of the $c$-BCT are for other functions of interest, like the known PcN  or APcN (for all $c\neq 0$). We hope to see other applications and refinements of our methods, as well as continued progress in computing the $c$-differential and $c$-boomerang uniformity for other classes of functions.

\end{document}